\renewcommand\thanks[1]{\protect\footnotetext[0]{#1}}
\newtheorem{theorem}{Theorem}[section]
\newtheorem{corollary}{Corollary}[theorem]
\newtheorem{lemma}[theorem]{Lemma}
\newtheorem{proposition}{Proposition}[section]
\theoremstyle{definition}
\newtheorem{definition}{Definition}[section]
\newcommand{\n}{\mathbb{1}}
\theoremstyle{remark}
\newtheorem{problem}{Problem}
\begin{document}
\sloppy
\title{On Structural and Spectral Properties of Distance Magic Graphs}
\date{}
\author[1]{Himadri Mukherjee\thanks{himadrim@goa.bits-pilani.ac.in}}
\author[1]{Ravindra Pawar\thanks{p20200020@goa.bits-pilani.ac.in }}
\author[1]{Tarkeshwar Singh\thanks{tksingh@goa.bits-pilani.ac.in}}
\affil[1]{Department of Mathematics,

BITS Pilani K K Birla Goa Campus, Goa, India.}

\maketitle
\noindent
\begin{abstract}
A graph $G=(V,E)$ is said to be distance magic if there is a bijection $f$ from a vertex set of $G$ to the first $|V(G)|$ natural numbers such that for each vertex $v$, its weight given by $\sum_{u \in N(v)}f(u)$ is constant, where $N(v)$ is an open neighborhood of a vertex $v$. In this paper, we introduce the concept of $p$-distance magic labeling and establish the necessary and sufficient condition for a graph to be distance magic. Additionally, we introduce necessary and sufficient conditions for a connected regular graph to exhibit distance magic properties in terms of the eigenvalues of its adjacency and Laplacian matrices. Furthermore, we study the spectra of distance magic graphs, focusing on singular distance magic graphs. Also, we show that the number of distance magic labelings of a graph is, at most, the size of its automorphism group.
\end{abstract}
\noindent {\bf Keywords:} Distance magic labeling, Laplacian Matrix, Moore-Penrose Inverse, $p$-distance magic labeling.\\
\textbf{AMS Subject Classification 2021: 05C78.}
\section{Introduction}
Throughout this article, we assume that $G = (V,E)$ denotes a simple, undirected, finite graph on $n$ vertices. Let $N(u) = \{v \in V : uv \in E\}$ be the neighborhood of a vertex $u \in V(G)$. We denote the degree of a vertex $v$ by $deg(v)$ and is given by $deg(v) = |N(v)|$. Let $K_n$ denote a complete graph on $n$ vertices. 

\smallskip
A bijective function $f: V(G) \rightarrow \{1,2, \dots, n\}$ is called the \textit{vertex labeling} of a graph $G$. If the sum $\sum_{u \in N(v)}f(u)$ is constant $k$, for all $v \in V$, then the vertex labeling $f$ is called a \textit{distance magic labeling}, $k$ is called a \textit{magic constant}. Any graph $G$ that admits the distance magic labeling is called a \textit{distance magic graph}. It is known that the magic constant of a graph $G$, if it exists, is unique, that is, independent of distance magic labeling \cite{uniquek, o_uniquek}. Since the general characterization of a graph as distance magic is not known, researchers have studied the existence of distance magic labeling for many families of graphs \cite{dm_survey_gallian}. Although there are numerous necessary conditions for a graph to possess a distance magic labeling \cite{vilfredt, uniquek, o_uniquek}, the elusive sufficient condition for this phenomenon remains undiscovered. In this paper, we give several necessary and sufficient conditions for a graph to be a distance magic.

\smallskip
A multiset is a set in which repetition of elements is allowed. We formally define the multiset of our interest.
\begin{definition}
Given an integer $p \ge 1$, by $\{1, 2, \dots, n \}_p$, we mean a \textit{multiset} obtained by reducing all numbers $1, 2, \dots, n$ to modulo $p$ and replacing $0$s if any by $p$.
\end{definition}
For example, with $p = 4$ and $n = 9$, we have the multiset $\{1, 2, 3, 4, 1, 2, 3, 4, 1\}_4$ obtained by reducing the elements $1, 2, 3, 4, 5, 6, 7, 8, 9$ to modulo $4$ and replacing $0$s by $4$. Note that for $p > n$, $\{1,2, \dots, n\}_p = \{1,2, \dots, n\}$.

\smallskip
We introduce the concept of $p$-distance magic labeling as a generalization of distance magic labeling of graphs.
\begin{definition}
Let $G$ be a graph on $n$ number of vertices and given an integer $p \ge 1$. Consider a multiset $T = \{1, 2, \dots, n\}_p$. We call graph $G$ a $p$-\textit{distance magic} if there is a bijective map $f$ from the vertex set $V$ to a multiset $T$ called a $p$\textit{-distance magic labeling} such that the weight of each vertex $x \in V$, denoted by $w(x)$, is equal to the same element $\mu (\bmod~p)$, where $w(x) = \sum_{y \in N(x)}f(y)$. 
\end{definition}
For example, consider a graph $G$ in Figure \ref{fig:2-magic} on $11$ vertices. With $p=2$, we have a multiset \(\{1,2,1,2,1,2,1,2,1,2,1\}_{2}\). Then, with labeling, as shown in the figure below, $G$ is $2$ -magic with magic constant $0$.

\begin{figure}[ht]
    \centering
    \includegraphics[scale=0.5]{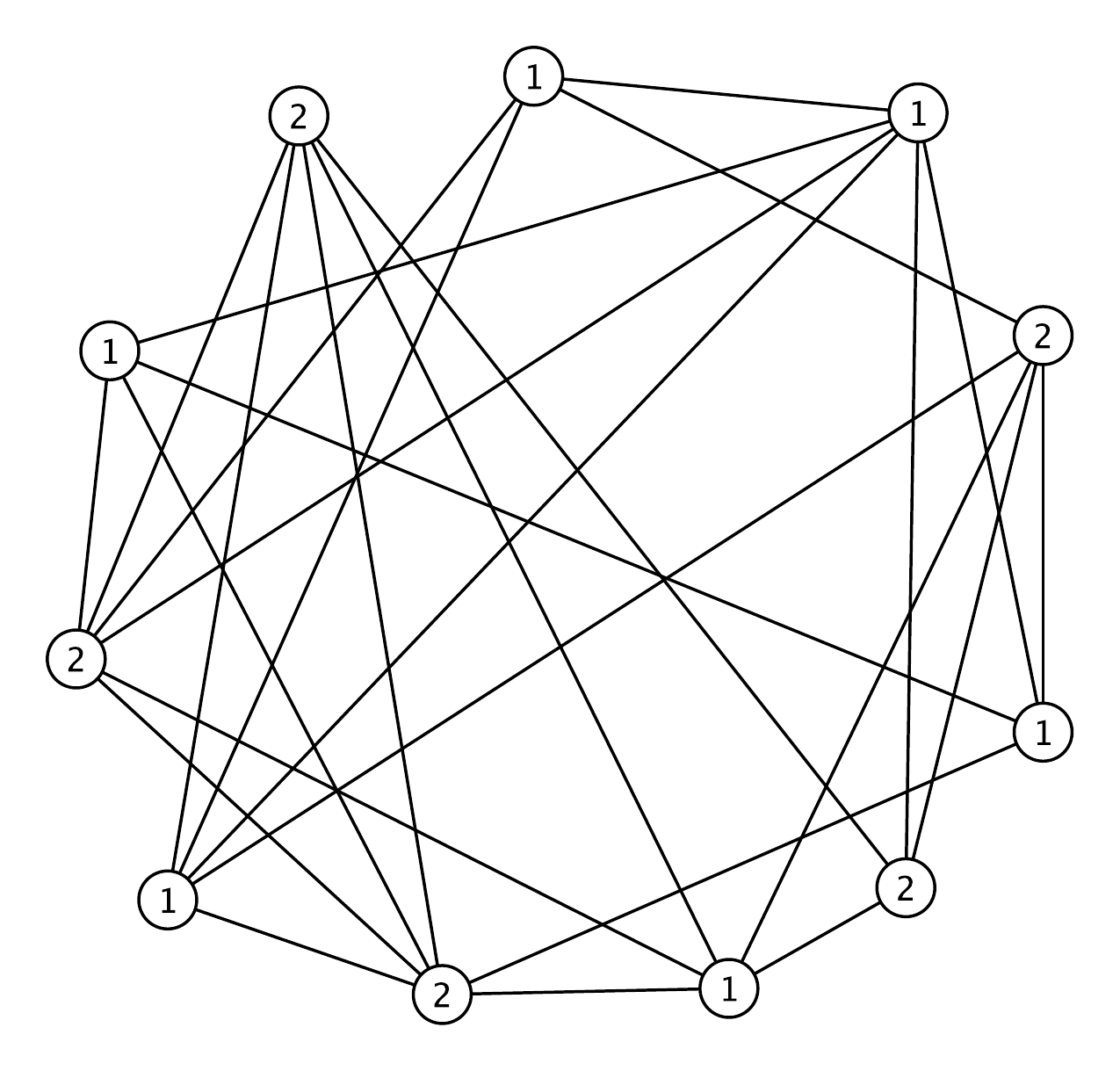}
    \caption{A $2$-distance magic labeling of a graph}
    \label{fig:2-magic}
\end{figure}

The $p$-distance magic labeling also constitute an infinite class of necessary conditions that become sufficient when true for all $p$. In the context of a graph with $n$ vertices, we simplify the labeling process by reducing the labels $1, 2, \dots, n$ modulo $p$, creating a multiset where the repetition of elements is allowed. We then calculate the weights modulo $p$ in the usual manner. This approach reduces the number of unique labels used, allowing the calculations to be streamlined. It is important to emphasize that $p$-distance magic labeling differs from group distance magic labeling or $\Gamma$-distance magic labeling \cite{group_cycles_dalibor}, particularly when $\Gamma = \mathbb{Z}_n$. In the former, the requirement for the number of distinct labels to equal the number of vertices is relaxed.

\smallskip
It is well known that there cannot be a subgraph avoidance criterion for the graphs that admit magic labeling (since any connected graph can be embedded in a distance magic graph \cite{sigma_jinnah}). As a result, the problem of magic labeling does not allow for the construction of magic labeling from the labeling on a subgraph. One can say that the problem of distance magic labeling is a difficult problem with little room for ``construction" in the underlying graph because it does not allow ``cutting and pasting" type techniques. However, $p$-distance magic labeling allows such cutting and pasting techniques, as is evident from applying the Chinese remainder theorem (see Theorem \ref{th:crt}). The simplification through the existence of a $p$-magic labeling and the patching up of two different such labeling through the Chinese remainder theorem gives us some constructive approach. Such possibilities give us hope that to tackle the problem of distance magic labeling, this ``construction" idea through $p$-distance magic labeling will prove to be very important. With sufficiently larger values of $p$, a $p$-distance magic labeling becomes the case of distance magic labeling, and with $p=n$, $p$-distance magic labeling becomes $\mathbb{Z}_n$-distance magic labeling.

\smallskip
We explore certain linear combinations of matrices associated with distance magic graphs to establish a condition that is necessary and sufficient for a graph to be considered distance magic. Specifically, we prove that if a graph $G$ possesses a distance magic property with a magic constant $k$, then $k$ is an eigenvalue of a particular linear transformation associated with $G$. Furthermore, a connected $r$-regular graph is distance magic if and only if $r^2$ is an eigen value of $L^2 + A^2$, where $A$ is an adjacency matrix and $L$ is the Laplacian matrix of $G$. Lastly, we obtain one necessary condition by proving that if a graph is distance magic, then $AA^{\dagger}$ is doubly stochastic, where $A$ is the adjacency matrix of $G$ and $A^{\dagger}$ is a Moore-Penrose inverse of $A$.

\smallskip
We have established bounds on the number of distance magic labelings of a distance magic graph based on an automorphism group of the graph. Moreover, we present several examples of distance magic graph where the number of distance magic labelings matches the cardinality of the automorphism group.

\smallskip
As mentioned earlier, forbidden subgraph characterisation is not possible for distance magic graphs. However, we have the existence of some subgraphs as a necessary condition for a graph to be distance magic. It can be proved easily using the following theorem.

\begin{theorem} \label{th: dm_symm} \cite{sigma_jinnah, vilfredt}
A graph $G$ is not distance magic if there are vertices $x$, $y$ in $G$ such that $\mid N(x) \triangle N(y) \mid \, = 1 \text{ or } 2$.
\end{theorem}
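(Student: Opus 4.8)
The plan is to argue by contradiction, exploiting the fact that a distance magic labeling forces every vertex to carry the same weight. Suppose $G$ admits a distance magic labeling $f \colon V(G) \to \{1, 2, \dots, n\}$ with magic constant $k$, and suppose $x, y$ are vertices with $|N(x) \triangle N(y)| \in \{1, 2\}$. The starting point is the pair of identities $\sum_{u \in N(x)} f(u) = k = \sum_{u \in N(y)} f(u)$, which I would combine into the single equality $\sum_{u \in N(x)} f(u) = \sum_{u \in N(y)} f(u)$.

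The key simplification is to cancel the contributions of the common neighbors in $N(x) \cap N(y)$, since these appear identically on both sides. Writing $A = N(x) \setminus N(y)$ and $B = N(y) \setminus N(x)$, so that $N(x) \triangle N(y) = A \sqcup B$ is a disjoint union, the equal-weight condition collapses to $\sum_{u \in A} f(u) = \sum_{u \in B} f(u)$. This is the heart of the argument: the global magic condition has been localized to a tiny symmetric difference.

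It then remains to run a short case analysis on the sizes $(|A|, |B|)$ compatible with $|A| + |B| \in \{1, 2\}$. When one of the two sets is empty, the other reduces to a sum of one or two labels forced to equal zero, which is impossible because $f$ takes values in the positive integers. The only remaining case, $|A| = |B| = 1$, forces $f(a) = f(b)$ for the two necessarily distinct vertices $a \in A$ and $b \in B$, contradicting the injectivity of $f$. Since every admissible case yields a contradiction, no such labeling can exist.

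I do not anticipate a genuine obstacle here: the entire content lies in recognizing that equal weights together with cancellation of the common neighborhood isolates the symmetric difference, after which positivity and injectivity of the bijection $f$ finish the argument. The one point deserving slight care is organizing the case split so that every partition of a symmetric difference of size $1$ or $2$ is covered, including the degenerate cases where one of $A$ or $B$ is empty.
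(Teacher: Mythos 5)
Your argument is correct and complete: cancelling the common neighborhood reduces the equal-weight condition to $\sum_{u \in A} f(u) = \sum_{u \in B} f(u)$ over the symmetric difference, and the case analysis on $(|A|,|B|) \in \{(1,0),(0,1),(2,0),(0,2),(1,1)\}$ is exhaustive, with positivity of the labels killing the unbalanced cases and injectivity killing the $(1,1)$ case. Note that the paper states this theorem as a cited result from the literature and gives no proof of its own, so there is nothing to compare against; your argument is the standard one and fills that gap correctly.
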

\begin{theorem}
Every distance magic graph contains $P_3$ or $C_4$ as a subgraph or $P_3$ as an induced subgraph. 
\end{theorem}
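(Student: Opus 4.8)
The plan is to read the conclusion straight off the forbidden symmetric-difference condition of Theorem~\ref{th: dm_symm}: in a distance magic graph no two vertices $x,y$ may satisfy $|N(x)\triangle N(y)|\in\{1,2\}$. Since the edgeless graph is only degenerately distance magic (every weight is an empty sum), I would first dispose of that case and assume $G$ has at least one edge; the task is then to locate one of the three listed structures.

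The one computation the argument rests on is that an edge already contributes its own endpoints to the symmetric difference of their neighbourhoods. If $u\sim v$, then since $u\notin N(u)$ and $v\notin N(v)$ we have $v\in N(u)\setminus N(v)$ and $u\in N(v)\setminus N(u)$, so $\{u,v\}\subseteq N(u)\triangle N(v)$ and hence $|N(u)\triangle N(v)|\ge 2$. Applied to a graph of maximum degree at most $1$ (equivalently, a graph with no $P_3$ subgraph), the two endpoints of any edge give $|N(u)\triangle N(v)|=2$, which Theorem~\ref{th: dm_symm} forbids; so a distance magic graph with an edge must have a vertex of degree at least $2$ and therefore already contains $P_3$ as a subgraph.

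To obtain the strongest alternative, I would feed the same inequality back into Theorem~\ref{th: dm_symm}. Fix any edge $uv$. From $\{u,v\}\subseteq N(u)\triangle N(v)$ we get $|N(u)\triangle N(v)|\ge 2$, and since the value $2$ is excluded we must in fact have $|N(u)\triangle N(v)|\ge 3$. Hence there is a third vertex $w\notin\{u,v\}$ in $N(u)\triangle N(v)$, necessarily adjacent to exactly one of $u,v$; taking without loss of generality $w\in N(u)\setminus N(v)$, the triple $\{w,u,v\}$ spans exactly the edges $wu,uv$ and the non-edge $wv$, i.e.\ an induced $P_3$. Since an induced $P_3$ is in particular a $P_3$ subgraph, the disjunction in the statement holds, with the induced-$P_3$ conclusion subsuming both the subgraph-$P_3$ and the $C_4$ alternatives.

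I expect the only real obstacle to be conceptual: spotting that an edge automatically places its two endpoints into $N(u)\triangle N(v)$, so that Theorem~\ref{th: dm_symm}'s exclusion of the value $2$ hands us a third neighbour attached to exactly one endpoint. After that the induced $P_3$ appears with no further calculation, and the remaining care is purely bookkeeping: verifying that $w$ is distinct from $u$ and $v$, and treating the edgeless graph separately.
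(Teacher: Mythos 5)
Your proof is correct, but it takes a genuinely different and in fact sharper route than the paper. The paper splits on whether some pair of vertices has identical neighbourhoods: when $N(x)=N(y)$ and $\deg(x)=1$ it finds an induced $P_3$ through the common neighbour, and otherwise it argues that some pair must share a common neighbour (else $G$ is a union of $K_2$'s), yielding a $P_3$ subgraph or a $C_4$ according to whether the intersection has size $1$ or at least $2$. You instead fix a single edge $uv$ and observe that, with open neighbourhoods, $u$ and $v$ automatically lie in $N(u)\triangle N(v)$, so Theorem~\ref{th: dm_symm} forces $|N(u)\triangle N(v)|\ge 3$ and hands you a third vertex $w$ adjacent to exactly one endpoint --- an induced $P_3$ outright. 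This buys two things: the argument is shorter and avoids the paper's case analysis (which, as written, does not even treat the subcase $N(x)=N(y)$ with $\deg(x)\ge 2$), and it proves the strictly stronger statement that every distance magic graph with an edge contains an \emph{induced} $P_3$, making the $C_4$ alternative in the theorem superfluous. Your explicit handling of the edgeless graph (which is vacuously distance magic and satisfies neither conclusion) is also a point of care the paper glosses over.
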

\begin{proof}
Let $G$ be a distance magic graph. Then by Theorem \ref{th: dm_symm}, for any two vertices $x$, $y$ in $G$ we must have $\mid N(x) \triangle N(y) \mid \, = 0$ or $\mid N(x) \triangle N(y) \mid \, \ge 3$. That is, for any two vertices in $G$, their neighborhoods are identical or symmetric difference of their neighborhoods is at least $3$. Consider the following two cases.

\noindent \textit{Case i.} Suppose there exists two vertices $x,y$ such that $N(x) = N(y)$. If $deg(x) = 1$ (and hence $deg(y) = 1$) then there is an unique vertex $z$ adjacent to both $x$ and $y$. Therefore, there is a path $P_3$ induced by vertices $x, y$, and $z$.

\noindent \textit{Case ii.} Suppose that $N(x) \ne N(y)$ for any two vertices $x, y$ in $G$. Then, for every pair of vertices $x, y$ in $G$, we have $\mid N(x) \triangle N(y) \mid \, \ge 3$. That is $\mid N(x) \cup N(y) \mid - \mid N(x) \cap N(y) \mid \, \ge 3$. Note that $\mid N(x) \cap N(y) \mid \, = 0$ for all $x, y \in V(G)$ is not possible. Otherwise, $G$ is a disjoint union of $K_2$ components, which is not distance magic. Therefore, there exist two vertices $x, y$ in $G$ such that $\mid N(x) \cap N(y) \mid \, \ge 1$. Let $\mid N(x) \cap N(y) \mid \, = 1$. If $N(x) \cap N(y) = \{z\}$ then $G$ contains $P_3$ as a subgraph. Note that $P_3$ need not be an induced subgraph on the vertices $x,y$, and $z$. If $\mid N(x) \cap N(y) \mid \, \ge 2$ then $G$ contains a $C_4$ (and hence $P_3$) as a subgraph.
\end{proof}

\smallskip
For a set $D \subset \mathbb{N}$, let $ND(v)=\{u \in V(G):d(u,v) \in D \}$, where $d(u,v)$ denotes a distance between vertices $u$ and $v$. Let $W$ be a multiset of real numbers. A graph $G$ is said to be $(D,W)$-vertex magic if there exists a bijection $g : V (G) \to W$ such that for all $v \in V(G)$, $\sum_{u \in N(v)} g(u)$ is a constant \cite{o_uniquek}. In \cite{s_algo_slater}, Slater proved that the problem of determining whether a graph is $(D,W)$-vertex magic is $NP$-complete. Also, it is known that this problem is $NP$-complete in special classes of graphs like $3$-regular graphs where labeling set is a multiset with entries from the set of positive integers \cite{s_algo_gupta}. This shows that the problem of distance magic labeling is $NP$-complete when labeling set is a multiset with entries from the set of positive integers.\\

\noindent \textbf{Organisation of paper:}

In Section \ref{sec: pmagic}, we present the results on $p$-distance magic labeling and the necessary and sufficient conditions for a graph to be considered distance magic in terms of $p$-distance magic labeling.
In section \ref{sec: matrices}, we show that there are infinitely many distance magic graphs having $0$ as an eigenvalue. Also, we derive the conditions for a graph to be considered distance magic based on eigenvalues of its adjacency and Laplacian matrices. In Section \ref{sec: action}, we analyse the action of the automorphism group of a graph on the set of its distance magic labelings. We estimate the lower bounds on the number of distance magic labelings for the graph.

\smallskip
For undefined terminology and notation related to graph theory, we refer to \cite{west}, and for the survey on distance magic labeling, we refer to \cite{dm_survey_gallian, dm_survey_arumugam}.

\section[p-distance magic graphs]{\texorpdfstring{\emph{p}-distance magic graphs}{p-distance magic graphs}} \label{sec: pmagic}

For a distance magic graph, $G$ of order $n$ with distance magic labeling $f$ and the magic constant $k$, the sum of the weights of all vertices is $nk$ (see \cite{dm_miller, vilfredt}). That is $\sum_{x \in V(G)} w(x) = \sum_{x \in V(G)}f(x)deg(x) = nk$. The maximum possible degree in $G$ is $n-1$. Therefore, we obtain $nk \le (n-1)\sum_{x \in V(G)}f(x) = \frac{(n-1)n(n+1)}{2}$. This implies, $k \le \frac{n^{2}-1}{2}$.

\begin{theorem} \label{main}
A graph $G$ is distance magic if and only if it is $p$-distance magic for infinitely many values of $p$.
\end{theorem}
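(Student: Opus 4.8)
The plan is to prove the two implications separately: the forward direction is direct, while the reverse direction combines a pigeonhole argument with an elementary divisibility observation.

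For the forward direction, suppose $G$ is distance magic with labeling $f \colon V \to \{1, \dots, n\}$ and magic constant $k$. I would fix an arbitrary $p \ge 1$ and define $\bar f(v)$ to be $f(v)$ reduced modulo $p$, with a residue of $0$ replaced by $p$. Since $f$ is a bijection onto $\{1, \dots, n\}$, the values $\bar f(v)$ realise exactly the multiset $T = \{1, \dots, n\}_p$, so $\bar f \colon V \to T$ is a bijection. Because $\bar f(v) \equiv f(v) \pmod p$ for every $v$ (the replacement of $0$ by $p$ is harmless modulo $p$), the weight of each vertex under $\bar f$ satisfies $w(x) = \sum_{y \in N(x)} \bar f(y) \equiv \sum_{y \in N(x)} f(y) = k \pmod p$. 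Hence every vertex has weight congruent to $k$ modulo $p$, and $G$ is $p$-distance magic with $\mu = k \bmod p$. As this holds for every $p \ge 1$, it holds in particular for infinitely many values of $p$.

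For the reverse direction, the key is the observation already recorded in the text: whenever $p > n$, the multiset $\{1, \dots, n\}_p$ coincides with the ordinary set $\{1, \dots, n\}$. Thus at any $p > n$ for which $G$ is $p$-distance magic, a witnessing map is a genuine bijection $f_p \colon V \to \{1, \dots, n\}$ whose vertex weights are all congruent to one another modulo $p$. Since $G$ is $p$-distance magic for infinitely many $p$, it is so for infinitely many $p > n$; for each such $p$ I would select one witnessing bijection. There are only $n!$ bijections from $V$ to $\{1, \dots, n\}$, so by the pigeonhole principle a single bijection $f$ serves as a witness for infinitely many values of $p$.

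It remains to promote congruence to genuine equality of weights. For this fixed $f$ and any two vertices $x, y$, the $p$-magic condition gives $p \mid \bigl(w(x) - w(y)\bigr)$ for infinitely many $p$. The quantity $w(x) - w(y)$ is a fixed integer, and the infinitely many moduli dividing it are unbounded; an integer divisible by arbitrarily large positive integers must be zero. Therefore $w(x) = w(y)$ for every pair $x, y$, so $f$ is a distance magic labeling and $G$ is distance magic. The step I expect to require the most care is the pigeonhole reduction: one must restrict to $p > n$ precisely so that the labelings are honest bijections into a fixed finite set, which is what forces only finitely many labelings to absorb the infinitely many moduli. Once a single bijection is pinned down, the passage from divisibility by unboundedly large moduli to equality is immediate.
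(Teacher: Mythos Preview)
Your proof is correct. The forward direction is essentially identical to the paper's. For the converse, however, the paper takes a shorter route: rather than pigeonholing over labelings, it simply selects a single modulus $q$ from the infinite set with $q > \frac{n^{2}-1}{2}$; since $q > n$ the witnessing map is already a bijection onto $\{1,\dots,n\}$, and since every vertex weight lies in a range of length less than $q$, congruence of the weights modulo $q$ immediately forces equality. Your argument instead restricts to $p>n$, applies pigeonhole over the $n!$ bijections to pin down one labeling that works for infinitely many moduli, and then uses unbounded divisibility to kill the weight differences. Both are valid; the paper's approach is sharper in that it shows a \emph{single} sufficiently large $p$ already suffices (so the corollary that $G$ is $p$-distance magic for all $p$ is almost immediate), while your pigeonhole argument genuinely consumes infinitely many moduli but has the virtue of not needing any explicit bound on the weights.
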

\begin{proof}
Suppose that $G$ is a distance magic graph of order $n$ with magic constant $k$ and distance magic labeling $f$. Let $S$ be an infinite subset of those natural numbers $p$ for which $G$ is $p$-distance magic. Let $p \in S$ be arbitrary. Define $f_{p} : V \to \{1,2,\dots, n\}_p$ by $f_{p}(v) = f(v)(\bmod~{p})$. Therefore, the weight $w_{f_{p}}(x)$ of each vertex $x$ in $V$ is equal to the same number $k_{p} \equiv k (\bmod~{p})$. This proves the necessary part. Conversely, suppose that $G$ is $p$-distance magic for all $p \in S$. The set $S$ being infinite guarantees the existence of $q$ in $S$ such that $q > \frac{(n^{2}-1)}{2}$ and $G$ induce a $q$-distance magic labeling. Then $w(x) = k'$ for some $k' \in \mathbb{Z}_{q}$ and $k' \le \frac{(n^{2}-1)}{2} < q$. Therefore, $q$-distance magic labeling is the required distance magic labeling of $G$. This proves that $G$ is a distance magic graph.
\end{proof}
\begin{corollary}  
A graph $G$ on $n$ vertices is distance magic if and only if it is $p$-distance magic for all $p \ge 1$.
\end{corollary}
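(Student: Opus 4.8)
The plan is to obtain this corollary as a refinement of Theorem \ref{main}, since each implication collapses to material already contained in that theorem's proof. The statement is an ``if and only if,'' so I would treat the two directions separately, and in neither case do I expect to need machinery beyond the modular reduction $f_{p}(v) = f(v)(\bmod~p)$ used in the necessity part of Theorem \ref{main}.

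For the forward direction, suppose $G$ is distance magic with labeling $f$ and magic constant $k$. I would fix an arbitrary $p \ge 1$ and set $f_{p}(v) = f(v)(\bmod~p)$, replacing any $0$ by $p$. By the definition of $\{1,2,\dots,n\}_{p}$ this $f_{p}$ is exactly a bijection from $V$ onto the multiset, since the multiset is by construction the image of $\{1,\dots,n\}$ under this reduction. The weight of each vertex then satisfies $w_{f_{p}}(x) = \sum_{y \in N(x)} f(y)(\bmod~p) \equiv k(\bmod~p)$, the same residue for every $x$, so $G$ is $p$-distance magic. The key point to emphasise is that the argument in Theorem \ref{main} never used that $p$ ranged over a distinguished infinite set $S$: the reduction is valid uniformly for every $p \ge 1$, so $G$ is $p$-distance magic for all $p \ge 1$.

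For the converse, suppose $G$ is $p$-distance magic for all $p \ge 1$. Then in particular $G$ is $p$-distance magic for infinitely many values of $p$, and Theorem \ref{main} applies verbatim to conclude that $G$ is distance magic.

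There is essentially no obstacle beyond bookkeeping: one must confirm that $f_{p}$ remains a genuine bijection onto the multiset for small $p$ (including the degenerate $p=1$, where every graph is trivially $1$-distance magic with every weight $\equiv 0(\bmod~1)$), and that the congruence $w_{f_{p}}(x)\equiv k(\bmod~p)$ is constant across all vertices. Both follow immediately from the definitions, so the corollary is a direct packaging of Theorem \ref{main}, strengthened only by the observation that its necessity argument holds for every $p$ rather than merely for infinitely many.
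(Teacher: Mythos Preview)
Your proposal is correct and matches the paper's intent: the corollary is stated without proof immediately after Theorem \ref{main}, and your argument---reusing the modular reduction $f_{p}(v)=f(v)(\bmod~p)$ from the forward direction of Theorem \ref{main} for every $p\ge 1$, and invoking Theorem \ref{main} directly for the converse---is exactly the derivation the paper leaves implicit. Your remark that the necessity argument in Theorem \ref{main} never actually depends on restricting $p$ to a special infinite set $S$ is the only observation needed, and it is correct.
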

\begin{corollary}
A graph $G$ on $n$ vertices is distance magic if and only if it is $p$-distance magic for all prime numbers $p$.
\end{corollary}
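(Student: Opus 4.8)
The plan is to obtain both directions as immediate consequences of Theorem \ref{main}, the only extra ingredient being the classical fact that there are infinitely many primes. So I would not attempt any fresh labeling construction; the work has already been done, and the corollary is a specialization of Theorem \ref{main} to a particular infinite index set.

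For the forward implication, suppose $G$ is distance magic. Then by the preceding corollary $G$ is $p$-distance magic for every integer $p \ge 1$, and in particular for every prime $p$. Equivalently, one may reuse the necessity argument of Theorem \ref{main} verbatim: for each prime $p$ define $f_p(v) = f(v)\,(\bmod~p)$ and check that every vertex weight reduces to the common value $k\,(\bmod~p)$, so that $f_p$ is a $p$-distance magic labeling.

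For the converse, suppose $G$ is $p$-distance magic for every prime $p$, and let $S$ denote the set of all primes. Since $S$ is infinite, $G$ is in particular $p$-distance magic for infinitely many values of $p$, so the hypothesis of Theorem \ref{main} is met and we conclude that $G$ is distance magic. Unwinding the proof of Theorem \ref{main}, one selects a prime $q \in S$ with $q > (n^{2}-1)/2$—such a $q$ exists precisely because $S$ is unbounded in $\mathbb{N}$—and then the given $q$-distance magic labeling, whose common weight $k'$ satisfies $k' \le (n^{2}-1)/2 < q$, is forced to be an honest distance magic labeling.

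I expect no genuine obstacle here. The entire content of the corollary is the observation that the primes form an infinite (hence unbounded) subset of $\mathbb{N}$, which is exactly what is needed to invoke Theorem \ref{main} without any modification; everything else is inherited from that theorem and its preceding corollary.
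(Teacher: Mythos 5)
Your proof is correct and matches the paper's intent exactly: the paper states this corollary without proof as an immediate consequence of Theorem \ref{main}, and your argument—reducing modulo each prime for the forward direction and invoking the infinitude (unboundedness) of the primes to apply Theorem \ref{main} for the converse—is precisely the reasoning the paper relies on.
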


It follows from the Theorem \ref{main} that the $p$-distance magic labeling coincides with distance magic labeling whenever $p$ is sufficiently large. This shows that $p$-distance magic labeling is a generalization of distance magic labeling. Also, some structural characterisations are possible with $p=2$. Since $p = 2$ is the least possible, we focus on $2$-distance magic labelings. \par Let $G$ be a graph whose vertices are labeled with the labeling $f$ using the numbers in multiset \(T = \{1,2, \dots, n\}_p\) for some $p \ge 1$. Let $V_i = \{v \in V : f(v) \equiv i(\bmod~{p})\}$, for all $i = 1,2, \dots, p$. By $G_i$, we mean a subgraph of $G$ induced on $V_i$.

Recall that a graph $G$ is \textit{Eulerian} if and only if it has at most one non-trivial component and its vertices all have even degrees, and a \textit{matching} in a graph is an independent subset of a set of edges.
\begin{theorem}
Let $G$ be a graph on $n$ vertices that admits $2$-distance magic labeling. If
\begin{enumerate}
    \item a magic constant is $0$, then the subgraph $G_1$ is a union of Eulerian components.
    \item a magic constant is $1$ then the subgraph $G_1$ contains a matching.
\end{enumerate}
\end{theorem}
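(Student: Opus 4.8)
The plan is to reduce the magic condition modulo $2$ and then read off the parity of the degrees inside $G_1$. With $p=2$ the multiset $T$ consists only of the labels $1$ and $2$, and modulo $2$ a vertex carries the residue $1$ precisely when it lies in $V_1$ and the residue $0$ (i.e.\ the even label $2$) when it lies in $V_2$. Hence for every $x \in V$ the weight satisfies $w(x) = \sum_{y \in N(x)} f(y) \equiv |N(x) \cap V_1| \pmod 2$, since the contributions of the label-$2$ neighbours vanish modulo $2$. The $2$-distance magic hypothesis therefore collapses into the single combinatorial statement $|N(x) \cap V_1| \equiv \mu \pmod 2$ for all $x \in V$, where $\mu \in \{0,1\}$ is the magic constant.

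Next I would specialise this to the vertices of $V_1$. For $x \in V_1$ the quantity $|N(x) \cap V_1|$ is exactly $deg_{G_1}(x)$, the degree of $x$ in the induced subgraph $G_1$. Thus the theorem reduces to controlling the parity of the degree sequence of $G_1$: if $\mu = 0$ then every vertex of $G_1$ has even degree, while if $\mu = 1$ then every vertex of $G_1$ has odd degree. The one point that needs care here is that the identity $|N(x) \cap V_1| = deg_{G_1}(x)$ holds only for $x \in V_1$; for $x \in V_2$ the same count measures edges crossing into $V_1$ rather than an induced degree, so only the equations indexed by $V_1$ feed into the conclusion about the internal structure of $G_1$.

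For part (1), with $\mu = 0$ every connected component of $G_1$ is a connected graph all of whose vertices have even degree, hence Eulerian by the characterisation recalled just before the statement; therefore $G_1$ is a disjoint union of Eulerian components. For part (2), with $\mu = 1$ every vertex of $G_1$ has odd and in particular positive degree, so $G_1$ has no isolated vertex; consequently $G_1$ contains an edge, and any maximal matching of $G_1$ is nonempty, which gives the desired matching. As a consistency check, the handshake lemma applied to $G_1$ forces the number of odd-degree vertices to be even, so in the case $\mu = 1$ the set $V_1$ has even cardinality.

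I expect no serious analytic obstacle, since the argument is essentially parity bookkeeping. The only genuinely delicate step is the modular reduction in the first paragraph: one must keep track of the fact that the label $2$ is even and hence invisible modulo $2$, so that the magic condition cleanly becomes a count of $V_1$-neighbours, and one must then isolate exactly those parity equations (those indexed by $V_1$) that bear on the degrees internal to $G_1$.
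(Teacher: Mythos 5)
Your argument is correct and follows essentially the same route as the paper: reduce the weights modulo $2$ so that only neighbours in $V_1$ contribute, conclude that every vertex of $G_1$ has even (resp.\ odd) degree when $\mu=0$ (resp.\ $\mu=1$), and then invoke the Eulerian characterisation or the absence of isolated vertices in $G_1$. Your write-up is in fact slightly cleaner than the paper's on part (2), where the paper sketches a greedy construction of a maximal matching while you simply observe that odd degree forces $G_1$ to have an edge.
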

\begin{proof}
Suppose that a graph $G$ is $2$-distance magic. We prove the theorem in the following two cases.\\
\textit{Case} $i$. Let magic constant be $0$. If $G$ itself is an Eulerian, there is nothing to prove. Suppose $G$ is not Eulerian. For a magic constant to be $0$, each vertex in $G_1$ must have an even number of neighbors in $V_1$ and any number of neighbors in $V_2$. Therefore, each vertex in $G_1$ is of even degree. Therefore, $G_1$ is a union of Eulerian components.\\
\textit{Case} $ii$. Let magic constant be $1$. Let $v \in V_1$. For $w(v)=1, v$ must be adjacent to the odd number of vertices in $V_1$. Now, we construct a matching, say $M$. Let $v_1 \in V_1$ such that $vv_1 \in E(G_1)$. We add this edge $vv_1$ to the matching $M$. We repeat the process, which must end after a finite number of steps since $G_1$ is a finite graph. Therefore, we obtain a maximal matching $M$. This proves the theorem.
\end{proof}

\begin{figure}[ht]
     \begin{center}
        \subfigure[$G_1$ is an Eulerian]{%
            \label{fig:2magic1}
            \includegraphics[width=0.4\textwidth]{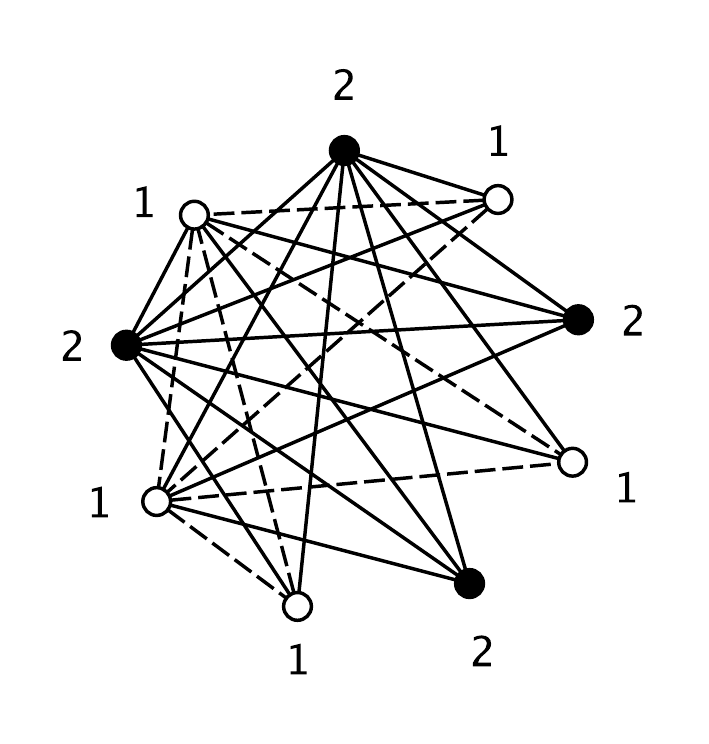}
            \label{2magic1}
        }%
        \subfigure[$G_1$ with matching]{%
           \label{fig:2magic2}
           \includegraphics[width=0.35\textwidth]{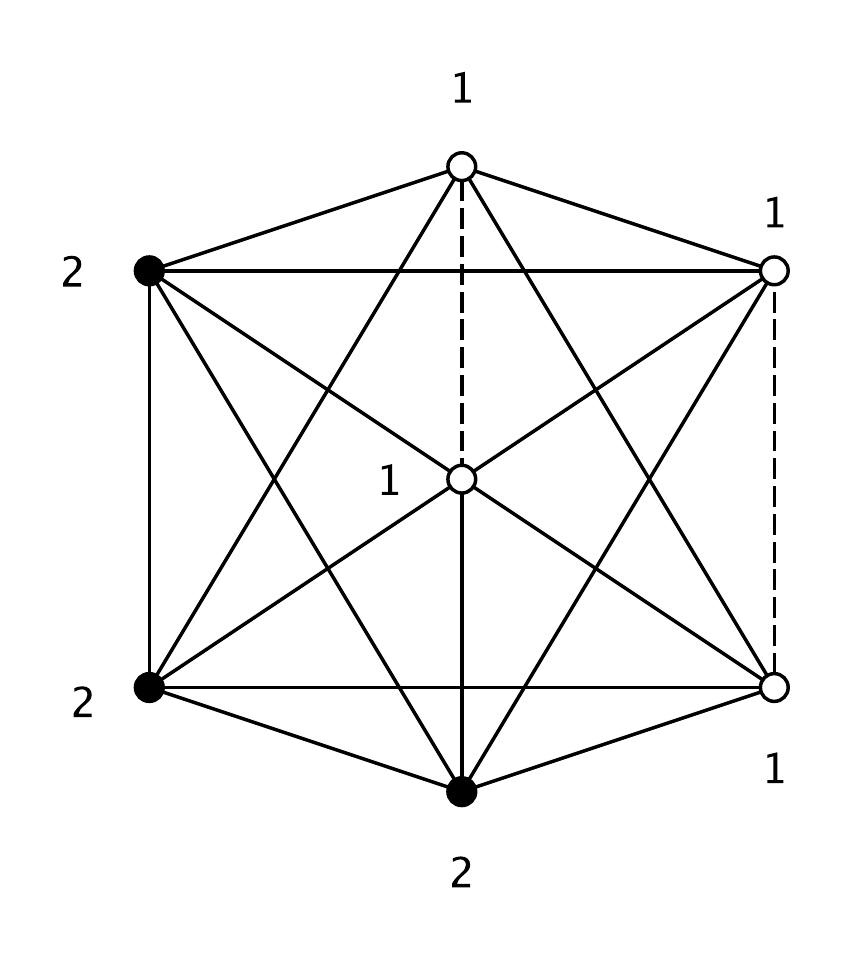}
        }%
    \end{center}
    \caption{%
        $2$-magic graphs with an Eulerian subgraph or a matching.
     }%
   \label{fig:2magicgraphs}
\end{figure}

Figure \ref{fig:2magicgraphs} shows the $2$-distance magic labeling of two graphs. A graph in Figure \ref{fig:2magic1} has magic constant $0$, and the subgraph $G_1$ induced by $V_1$ is depicted in dotted edges and hollow vertices is an Eulerian subgraph. A graph in Figure \ref{fig:2magic2} has a magic constant $1$, and $G_1$ contains a matching depicted in dotted edges. Note that in Figure \ref{fig:2magic2}, matching in $G_1$ is not unique, but it is a perfect matching. However, it is not the case that we always obtain a perfect matching. Also, in Figure \ref{fig:2magic2}, none of the vertices in $G_1$ are of even degree. Therefore, obtaining an Eulerian circuit is not possible.

\smallskip
Now we introduce the concept \textit{modulo regularity} of graphs and use to construct different $p$-distance magic labelings from the given $p$-distance magic labeling of a graph.

\begin{definition}
We call graph $G$ an \textit{$r$-modulo $p$ regular} if $deg(u) \equiv r (\bmod~{p})$ and we write $G$ is \textit{$r(\bmod~p$)-regular}.
\end{definition}

\begin{theorem} \label{multiple}
If $G$ is $r(\bmod~p)$-regular, $p$-distance magic graph on $n$ vertices with magic constant $k$ and $p$-distance magic labeling $f$, then for each positive integer $i$, the map $f' = f + i$ is also a $p$-distance magic labeling with magic constant $k' = (k+ir) (\bmod~{n})$.
\end{theorem}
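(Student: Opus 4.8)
The plan is to verify two separate claims about the candidate labeling $f' = f + i$: that it is a legitimate $p$-distance magic labeling, meaning a bijection of $V(G)$ onto the multiset $T = \{1,2,\dots,n\}_p$, and that all of its vertex weights share a common residue modulo $p$, which I will show equals $k + ir$. I would treat the weight calculation first, since it is the core of the statement and explains why the hypotheses of $r(\bmod~p)$-regularity and magic constant $k$ appear.

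For the weights, fix $x \in V(G)$ and expand directly:
\[
w_{f'}(x) = \sum_{y \in N(x)} f'(y) = \sum_{y \in N(x)} \bigl(f(y)+i\bigr) = \sum_{y \in N(x)} f(y) + i\,|N(x)| = w_f(x) + i\,\deg(x).
\]
Reducing modulo $p$ and using $w_f(x) \equiv k$ together with $\deg(x) \equiv r \pmod p$ gives $w_{f'}(x) \equiv k + ir \pmod p$, which does not depend on $x$. Hence every vertex has the same weight residue $k + ir$, establishing the constant-weight condition with the stated magic constant (taken modulo $p$; the ``$\bmod~n$'' in the statement appears to be a typographical slip).

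The step I expect to be the genuine obstacle is confirming that $f'$, reduced modulo $p$, remains a bijection onto $T$. Because $f$ is a bijection onto $T$, the value-multiset of $f'$ is the cyclic shift $T + i$, obtained by adding $i$ to each residue modulo $p$ and replacing any $0$ by $p$. A valid labeling requires $T + i = T$, i.e.\ invariance of $T$ under this shift. If $m(j)$ denotes the multiplicity of residue $j$ in $T$, then shift-invariance for \emph{every} positive integer $i$ forces $m$ to be constant on $\{1,\dots,p\}$, which holds exactly when $p \mid n$, so that each residue occurs $n/p$ times. I would therefore supply the hypothesis $p \mid n$ (or restrict to the $i$ for which the shift preserves $T$): the example $n = 11$, $p = 2$, where residue $1$ occurs six times and residue $2$ five times, shows that for $p \nmid n$ and odd $i$ the multiset $T+i$ genuinely differs from $T$, so bijectivity cannot be taken for granted. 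Once shift-invariance is secured, $f'$ merely permutes the labels of $T$, and combining this with the weight computation completes the argument.
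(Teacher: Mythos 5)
Your weight computation is precisely the paper's entire proof: the paper sets $g(u) = i + f(u)$, expands $w(u) = i\,deg(u) + \sum_{v \in N(u)} f(v) \equiv (ir + k) \pmod{p}$, and concludes. Your remark that the magic constant should be read modulo $p$ rather than modulo $n$ is also correct; the ``$\bmod~n$'' in the statement is a slip.

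The bijectivity concern you raise is a genuine issue that the paper's proof does not address at all. By the paper's own definition, a $p$-distance magic labeling must be a bijection onto the multiset $\{1,2,\dots,n\}_p$, and as your $n=11$, $p=2$ example shows, when $p \nmid n$ that multiset is not invariant under the shift $j \mapsto j+i$, so $f+i$ need not land bijectively on it. Hence the conclusion that $f'$ is a $p$-distance magic labeling fails as stated without an additional hypothesis such as $p \mid n$, $i \equiv 0 \pmod{p}$, or a relaxation of the codomain requirement. Your proposal is therefore not merely equivalent to the paper's argument; it reproduces the paper's computation and additionally identifies, and correctly repairs, a gap in the theorem and its proof.
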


\begin{proof}
Let $G$ be $r (\bmod~p)$-regular, $p$-distance magic graph with magic constant $k$ and labeling $f$ and let $i \in \mathbb{Z}^+$. Define $p$-distance magic labeling $g$ by $g(u) = i + f(u)$. Then, for $u \in V$, 
\begin{align*}
w(u) &= \sum_{uv \in E} g(v)\\
     &= \sum_{uv \in E} (i + f(v))\\
     &= i~deg(u) + \sum_{uv \in E} f(v)\\
     &\equiv (ir + k)(\bmod~p).
\end{align*}
This completes the proof.
\end{proof}

Theorem \ref{multiple} shows that a $p$-magic constant need not be unique. Nevertheless, in some cases, one can obtain the uniqueness of the same as described in the following theorem. (The ring $\mathbb{Z}_n$ is considered with the usual addition and multiplication modulo $n$.)

\begin{theorem} \label{unique}
Let $G$ be a graph on $n$ vertices. If a graph $G$ is a $p$-distance magic with magic constant $k$ and if $\frac{n(n+1)}{2}$ is a unit in the ring $\mathbb{Z}_p$, then $k$ is unique.
\end{theorem}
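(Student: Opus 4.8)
The plan is to pin down $k$ by a double-counting identity in which $\frac{n(n+1)}{2}$ appears precisely as the coefficient of $k$, and then to invert that coefficient using the unit hypothesis. The only labeling-independent fact I need about the multiset $T=\{1,2,\dots,n\}_p$ is the value of the sum of all labels modulo $p$. Since reducing $1,2,\dots,n$ modulo $p$ (replacing $0$ by $p$) does not change residues mod $p$, every $p$-distance magic labeling $f$ satisfies
\[
\sum_{v\in V} f(v)\;\equiv\;\sum_{j=1}^{n} j\;=\;\frac{n(n+1)}{2}\pmod p .
\]
I would record this first, as it is the single place where the quantity $\frac{n(n+1)}{2}$ enters.

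The key step is to evaluate $\sum_{v\in V} f(v)\,w(v)$ in two different ways. Using the defining property $w(v)\equiv k\pmod p$ for every vertex together with the displayed sum gives
\[
\sum_{v\in V} f(v)\,w(v)\;\equiv\;k\sum_{v\in V} f(v)\;\equiv\;k\cdot\frac{n(n+1)}{2}\pmod p .
\]
On the other hand, expanding $w(v)=\sum_{u\in N(v)}f(u)$ and grouping the terms by edges, each edge $uv$ contributes $f(u)f(v)$ exactly twice, so
\[
\sum_{v\in V} f(v)\,w(v)\;=\;2\sum_{uv\in E} f(u)f(v).
\]
Equating the two evaluations yields the congruence
\[
k\cdot\frac{n(n+1)}{2}\;\equiv\;2\sum_{uv\in E} f(u)f(v)\pmod p. \qquad(\ast)
\]

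Finally I would invoke the hypothesis: since $\frac{n(n+1)}{2}$ is a unit in $\mathbb{Z}_p$, the congruence $(\ast)$ determines $k$ as the unique residue $k\equiv\bigl(\tfrac{n(n+1)}{2}\bigr)^{-1}\,2\sum_{uv\in E}f(u)f(v)\pmod p$, so no other value of the magic constant is admissible; this stands in contrast to Theorem \ref{multiple}, where the unit condition fails and the shifted labelings supply genuinely different constants. The step I expect to be the real obstacle is not the algebra of $(\ast)$ but the passage from ``$(\ast)$ fixes $k$'' to uniqueness \emph{across all labelings}: for two $p$-distance magic labelings $f,g$ with constants $k,k'$, subtracting the two instances of $(\ast)$ gives $(k-k')\frac{n(n+1)}{2}\equiv 2\bigl[\sum_{uv\in E}f(u)f(v)-\sum_{uv\in E}g(u)g(v)\bigr]\pmod p$, and the crux is to show the bracket vanishes modulo $p$. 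This is where the unit hypothesis must ultimately do its work, and I would attack it by controlling the edge-sum through the symmetric sums of the fixed multiset $T$ (the quantities $\sum_v f(v)$ and $\sum_v f(v)^2$ depend only on $T$), reducing the desired independence to a property of $T$ rather than of the chosen bijection.
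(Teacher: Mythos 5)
Your identity $(\ast)$ is correct, and you are right that the whole difficulty lies in passing from it to uniqueness across labelings; but that is precisely the step your proposal does not supply, and the strategy you sketch for closing it cannot work. The right-hand side $2\sum_{uv\in E}f(u)f(v)$ is a sum over \emph{edges}, so it is not a function of the multiset $T$: it depends on which label sits on which vertex. (On $P_3$ with label multiset $\{1,2,3\}$ the quantity $\sum_{uv\in E}f(u)f(v)=f(b)\bigl(f(a)+f(c)\bigr)$, where $b$ is the centre, takes the values $9$, $8$, $5$ according as the centre is labelled $3$, $2$, $1$.) Hence ``controlling the edge-sum through the symmetric sums of the fixed multiset $T$'' --- i.e.\ through $\sum_v f(v)$ and $\sum_v f(v)^2$ --- cannot show that the bracket $\sum_{uv\in E}f(u)f(v)-\sum_{uv\in E}g(u)g(v)$ vanishes modulo $p$. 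That the bracket does vanish for two magic labelings (when $p$ is odd) is a \emph{consequence} of the theorem, not a route to it, so as written the argument is incomplete at exactly the point you flag.

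The repair is to replace your diagonal form $\sum_v f(v)\,w_f(v)$ by the \emph{mixed} form $\sum_v f(v)\,w_g(v)$, which is what the paper does in matrix language. With $X,Y$ the label vectors of $f,g$ and $A$ the adjacency matrix, $X^\top AY$ is a scalar, hence equals its transpose $Y^\top AX$; using $AY=l\n$ and $AX=k\n$ this reads $l\sum_v f(v)=k\sum_v g(v)$, i.e.\ $l\cdot\frac{n(n+1)}{2}\equiv k\cdot\frac{n(n+1)}{2}\pmod p$, and the unit hypothesis gives $l=k$ at once. The point is that the cross term $\sum_{uv\in E}\bigl(f(u)g(v)+f(v)g(u)\bigr)$ is manifestly symmetric in $f$ and $g$ and therefore cancels identically, whereas your two edge-sums $\sum_{uv\in E}f(u)f(v)$ and $\sum_{uv\in E}g(u)g(v)$ are genuinely different numbers whose agreement modulo $p$ you would still have to prove. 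Your $(\ast)$ is the special case $g=f$ of this bilinear identity, which is exactly why it loses the information needed for uniqueness.
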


\begin{proof}
Let $G$ be a graph on $n$ vertices $\{x_1, x_2, \dots, x_n\}$ having two $p$-distance magic labelings $f$ and $g$ with respective magic constants $k$ and $l$. Let $\n$ be the column vector with all $n$ entries $1$. Let $A$ be an adjacency matrix of $G$ and put $X = (f(x_1), \dots, f(x_n))^\top$ and $Y = (g(x_1), \dots, g(x_n))^\top$. Since $f$ and $g$ are distance magic labelings with magic constants $k$ and $l$, it follows that $AX = k\n$ and $AY = l\n$. Since, $X^\top AY$ is $1 \times 1$ matrix, we have $X^\top AY = (X^\top AY)^\top = Y^\top AX$. This gives
\begin{align*}
 l X^\top \n &= k Y^\top \n\\
\implies l(1+2+ \dots + n) &= k(1+2+ \dots + n).
\end{align*}
Since $(1+2+ \dots + n)$ is a unit in $\mathbb{Z}_p$, by the cancellation laws we have $l = k$.
\end{proof}
 
\begin{corollary} \label{uniquecor1}
If $G$ is a $2$-distance magic graph of order $n \ge 3$, where $n \equiv 1 \text{ or } 2 (\bmod~{4})$ with magic constant $k$, then $k$ is unique.
\end{corollary}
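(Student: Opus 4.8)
The plan is to read this off directly from Theorem~\ref{unique} applied with $p = 2$. That theorem guarantees a unique magic constant as soon as $\frac{n(n+1)}{2}$ is a unit in $\mathbb{Z}_p$. In the ring $\mathbb{Z}_2$ the only unit is $1$, so the entire content of the corollary reduces to a parity statement: I must show that the triangular number $T_n = \frac{n(n+1)}{2}$ is odd exactly when $n \equiv 1$ or $2 \pmod{4}$.

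To verify this, I would check the four residue classes of $n$ modulo $4$ directly. Since $n$ and $n+1$ have opposite parities, exactly one of them carries the factor that gets halved, and the parity of $T_n$ is then the parity of the product of the surviving factors. Writing $n = 4m + s$ and computing in each case shows that $T_n$ is even for $s \in \{0,3\}$ and odd for $s \in \{1,2\}$. Concretely, for $s = 1$ one gets $T_n = (4m+1)(2m+1)$ and for $s = 2$ one gets $T_n = (2m+1)(4m+3)$, each a product of two odd numbers, hence odd; the classes $s=0$ and $s=3$ contain the factor $4$ and yield an even result.

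With the hypothesis $n \equiv 1$ or $2 \pmod{4}$ in force, this computation gives $T_n \equiv 1 \pmod{2}$, so $\frac{n(n+1)}{2}$ is a unit in $\mathbb{Z}_2$, and Theorem~\ref{unique} delivers uniqueness of $k$ at once. There is essentially no hard step here; the only point requiring mild care is to confirm that the two congruence classes singled out in the statement are \emph{precisely} the ones making $T_n$ odd, and not merely a subset, so that the hypothesis of Theorem~\ref{unique} is genuinely satisfied under the stated assumption.
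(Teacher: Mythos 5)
Your proposal is correct and matches the paper's proof essentially verbatim: both apply Theorem~\ref{unique} with $p=2$ and verify that $\frac{n(n+1)}{2}$ factors as $(4m+1)(2m+1)$ or $(2m+1)(4m+3)$ in the two residue classes, hence is odd and a unit in $\mathbb{Z}_2$. The extra check that these are \emph{precisely} the classes making $T_n$ odd is a nice observation but not needed for the corollary as stated.
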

\begin{proof}
Let $G$ be $2$-distance magic graph of order $n \ge 3$. Suppose $n \equiv 1 (\bmod~{4})$, that is, $n =4j+1$, for some integer $j$. Then $\frac{n(n+1)}{2} = (4j+1)(2j+1)$ which is unit in $\mathbb{Z}_2$. Similarly, when $n = 4j+2$, for some integer $j$, then $\frac{n(n+1)}{2} = (2j+1)(4j+3)$ which again an unit in $\mathbb{Z}_2$. This completes the proof.
\end{proof}

\begin{theorem} \label{th:crt}
If $G$ is a $p$-distance magic as well as $q$-distance magic graph on $n$ vertices for some relatively prime integers $p$ and $q$ such that $pq \le n$, then $G$ is $pq$-distance magic.
\end{theorem}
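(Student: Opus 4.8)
The plan is to exploit the ring isomorphism supplied by the Chinese remainder theorem. Since $\gcd(p,q)=1$, the map $c \mapsto (c \bmod p,\, c \bmod q)$ is a bijection from $\{1,2,\dots,pq\}$ (with $pq$ playing the role of $0$) onto $\mathbb{Z}_p \times \mathbb{Z}_q$. Let $f_p$ and $f_q$ be the given $p$- and $q$-distance magic labelings, with magic constants $\mu_p \in \mathbb{Z}_p$ and $\mu_q \in \mathbb{Z}_q$. For each vertex $v$ I would define $f_{pq}(v)$ to be the unique element of $\{1,\dots,pq\}$ satisfying $f_{pq}(v) \equiv f_p(v) \pmod p$ and $f_{pq}(v) \equiv f_q(v) \pmod q$; this is well defined precisely because $p$ and $q$ are coprime.

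The weight condition is then immediate. For every vertex $x$,
\[
 w_{f_{pq}}(x) = \sum_{y \in N(x)} f_{pq}(y) \equiv \sum_{y \in N(x)} f_p(y) = w_{f_p}(x) \equiv \mu_p \pmod p,
\]
and the same computation modulo $q$ gives $w_{f_{pq}}(x) \equiv \mu_q \pmod q$. Since these two residues are the same for every $x$, the Chinese remainder theorem forces $w_{f_{pq}}(x)$ to be congruent modulo $pq$ to one fixed value $\mu_{pq}$, namely the unique class reducing to $\mu_p$ and $\mu_q$. Thus $f_{pq}$ has constant weight modulo $pq$, exactly as required for a $pq$-distance magic labeling.

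The substantive step, and the one I expect to be the main obstacle, is to check that $f_{pq}$ is a genuine bijection onto the multiset $\{1,2,\dots,n\}_{pq}$. Equivalently, for each residue class $c$ modulo $pq$ the number of vertices with $f_{pq}(v)=c$ must equal the multiplicity of $c$ in $\{1,\dots,n\}_{pq}$, i.e. the number of $j \in \{1,\dots,n\}$ with $j \equiv c \pmod{pq}$. Writing $n = m\,pq + s$ with $0 \le s < pq$, this multiplicity is $m+1$ for the classes $c \in \{1,\dots,s\}$ and $m$ for the remaining classes; the hypothesis $pq \le n$ guarantees $m \ge 1$, so every class modulo $pq$ is actually attained and no vertex is demanding an empty class. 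One clean way to organise this is to reformulate it as a system-of-distinct-representatives problem: I would try to lift $(f_p,f_q)$ to an honest bijection $\tilde f : V \to \{1,\dots,n\}$ with $\tilde f \equiv f_p \pmod p$ and $\tilde f \equiv f_q \pmod q$, since then $f_{pq} = \tilde f \bmod pq$ is automatically onto the correct multiset while both of its reductions remain magic. By Hall's theorem such a lift exists exactly when, for every residue class $c$ modulo $pq$, the number of vertices requesting class $c$ matches the available supply.

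The delicate point is that these joint counts are not determined by the two marginal distributions of $f_p$ and $f_q$ alone, so the required matching cannot come from bijectivity of the individual labelings by itself. Here I would bring the magic hypotheses to bear, arguing that the constant-weight conditions modulo $p$ and modulo $q$ (or, failing that, the freedom to re-select the $q$-distance magic labeling among the valid ones and to permute labels within each residue class of $f_p$ without disturbing the weight modulo $p$) force the joint residue distribution of $(f_p,f_q)$ across $V$ to coincide with that of $1,2,\dots,n$. Establishing this count-matching is the crux; once it is secured the bijection follows, and combined with the weight computation above it certifies that $f_{pq}$ is a $pq$-distance magic labeling.
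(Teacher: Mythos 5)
Your construction is exactly the paper's: for each vertex solve the pair of congruences $x \equiv f_p(v) \pmod p$, $x \equiv f_q(v) \pmod q$ to obtain $f_{pq}(v)$ modulo $pq$, then run the same weight computation and apply the Chinese remainder theorem once more to conclude that the weight lies in a single fixed class modulo $pq$. That part of your argument is complete and coincides with the paper's.

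The step you flag as the crux --- that $f_{pq}$ is a bijection onto the multiset $\{1,2,\dots,n\}_{pq}$, i.e.\ that the joint residue distribution of $\left(f_p(v), f_q(v)\right)$ over $V$ coincides with that of $(j \bmod p,\, j \bmod q)$ for $j = 1,\dots,n$ --- is indeed not a consequence of $f_p$ and $f_q$ being individually bijective onto their respective multisets, and your proposal does not close it: you sketch two possible strategies (a Hall-type matching argument, or re-selecting the labelings within their residue classes) but explicitly leave the count-matching unestablished. So as written the proposal has a genuine gap. You should know, however, that the paper's own proof is no better on this point: it disposes of bijectivity with the single sentence that ``the uniqueness of the labels $y_i$s guarantees that $f_{pq}$ is a bijective map,'' which only says the map is well defined, not that the multiset $\{y_1,\dots,y_n\}$ realizes each class modulo $pq$ with the correct multiplicity. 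For instance with $n = 6$, $p = 2$, $q = 3$, nothing in the two marginal conditions prevents two distinct vertices from receiving $f_2 = 1$ and $f_3 = 1$ simultaneously, which would send both to the class of $1$ modulo $6$, a class of multiplicity one in $\{1,\dots,6\}_6$. You have therefore correctly located the missing step; neither you nor the paper supplies it, and a complete proof must either establish the count-matching (possibly after modifying $f_p$ or $f_q$) or assume it outright, which is essentially what the paper's later notion of a ``consistent'' labeling does in the case $pq > n$.
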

\begin{proof}
Let $G$ be a graph with vertex set $\{x_1, x_2, \dots, x_n\}$ that is both $p$-distance magic and $q$-distance magic for some relatively prime integers $p$ and $q$ such that $pq \le n$. Let $f_p, f_q$ be a $p$-distance and $q$-distance magic labelings, and $k_p, k_q$ be corresponding magic constants. For each $1 \le i \le n$, let $f_p(x_i) \equiv a_i (\bmod~p)$ and $f_q(x_i) \equiv b_i (\bmod~q)$. Since $p$ and $q$ are co-prime, by the Chinese remainder theorem, the system of congruences
\begin{align*}
x \equiv a_i (\bmod~p)\\ x \equiv b_i (\bmod~q)
\end{align*}
has unique solution say $y_i (\bmod~pq)$, for each $i (1 \le i \le n)$. Now we define new labeling $f_{pq}$ by $f_{pq}(x_i) = y_i$, for each $i(1 \le i \le n)$. The uniqueness of the labels $y_i$s guarantees that $f_{pq}$ is a bijective map. Now we calculate the weight of a vertex $x_i \in V$ under $f_{pq}$. 
\begin{align*}
    w(x_i) &= \sum_{x_j \in N(x_i)}f_{pq}(x_j)\\
           &\equiv 
              \begin{cases}
               k_p (\bmod~p) \\
               k_q (\bmod~q).
              \end{cases}
\end{align*}
For each $1 \le i \le n$, again we solve the system of congruences 
\begin{equation} \label{sys:1}
\begin{aligned}
w(x_i) \equiv k_p (\bmod~p)\\
w(x_i) \equiv k_q (\bmod~q)
\end{aligned}
\end{equation}
by the Chinese remainder theorem, and we conclude that the system of congruences (\ref{sys:1}) has a unique solution, denoted by $k_{pq} (\bmod~pq)$. This proves that $f_{pq}$ is the required labeling and $G$ is a $pq$-distance magic graph.
\end{proof}

Given a graph $G$ on $n$ vertices which is both $p$-distance magic and $q$-distance magic for some relatively prime integers $p$ and $q$ such that $pq \le n$ then using the Theorem \ref{th:crt}, we can find $pq$-distance magic labeling for $G$. When $pq > n$, we need not always obtain a $pq$-distance magic graph. 
For example, consider a cycle $C_4 = v_1, v_2, v_3, v_4, v_1$. Define $2$-magic labeling of $C_4$ by $f_2(v_1) = 1$, $f_2(v_2) = 2$, $f_2(v_3) = 2$, $f_2(v_4) = 1$ and a $3$-magic labeling by $f_3(v_1) = 2$, $f_3(v_2) = 1$, $f_3(v_3) = 3$, $f_3(v_4) = 1$ as shown in Figure \ref{fig:crt}. In Figure \ref{fig:crt}, the graph $G_1$ shows the $2$-distance magic labeling of $C_4$ with magic constant $1$, and the graph $G_2$ shows a $3$-distance magic labeling with magic constant $2$ of the same graph $C_4$. For each $i (1 \le i \le 4)$, we solve the system
\begin{equation} \label{sys:2}
\begin{aligned}
x \equiv f_2(x_i) (\bmod~2)\\
x \equiv f_3(x_i) (\bmod~3)
\end{aligned}
\end{equation}
using the Chinese remainder theorem. For each $i (1 \le i \le 4)$, the unique solution of the system (\ref{sys:2}) is $2,4,6,1$ respectively. We label the vertices of $C_4$ using these solutions: $f_6(v_1) = 2$, $f_6(v_2) = 4$, $f_6(v_3) = 6$, $f_6(v_4) = 1$ as shown in the graph $G_3$ of Figure \ref{fig:crt}. Observe that $f_6$ is not a map from $V(C_4)$ to $\{1,2,3,4\}_6$. Thus, in this case, we cannot obtain the $6$-distance magic graph using the given $2$-distance magic and $3$-distance magic graph. This does not contradict the Theorem \ref{th:crt} because $(p=2) \times (q=3) = 6 > 4 = n$. Therefore, condition $pq < n$ cannot be ignored in the statement of Theorem \ref{th:crt}. However, it may happen that in some cases, each label $y (\bmod~pq)$ obtained using the Chinese remainder theorem as described in the above theorem satisfies $1 \le y \le n$. We call such labeling a \textit{consistent} labeling. In such cases, the labeling is indeed a $pq$-distance magic labeling.
\begin{figure}[ht]
 \begin{center}
        \subfigure[$G_1$]{
            \includegraphics[width=0.2\textwidth]{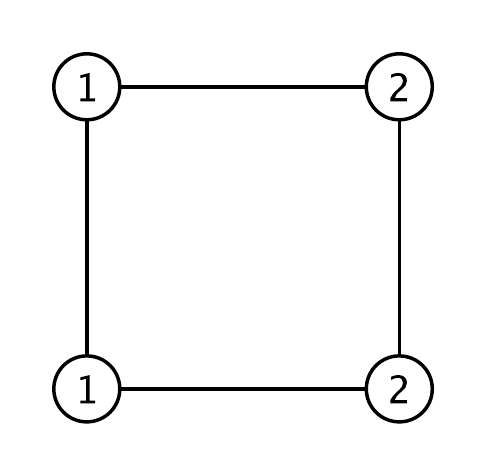}
        }
        \subfigure[$G_2$]{
           \includegraphics[width=0.2\textwidth]{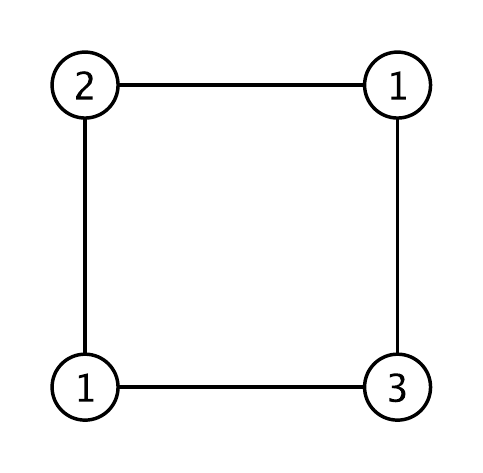}
        }
        \subfigure[$G_3$]{
           \includegraphics[width=0.2\textwidth]{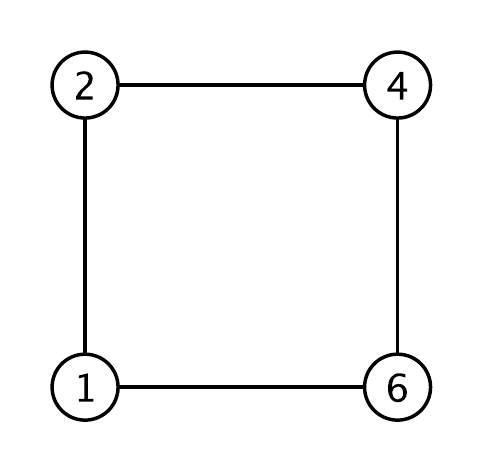}
        }
\end{center}
\caption{$p$-distance magic labelings of $C_4$}
\label{fig:crt}
\end{figure}
We state the following proposition without proof, as its proof is quite similar to Theorem \ref{th:crt}.
\begin{proposition}
If $G$ is a $p$-distance magic as well as a $q$-distance magic graph on $n$ vertices for some relatively prime integers $p$ and $q$ such that $pq > n$ and if the new labeling obtained as described in the proof of Theorem \ref{th:crt} is consistent then the graph $G$ is $pq$-distance magic. 
\end{proposition}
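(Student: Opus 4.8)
The plan is to follow the proof of Theorem~\ref{th:crt} almost verbatim, the only new feature being that here $pq > n$, so by the remark following the definition of $\{1,2,\dots,n\}_p$ we have $\{1,2,\dots,n\}_{pq} = \{1,2,\dots,n\}$; consequently a $pq$-distance magic labeling is required to be an honest \emph{bijection} of $V$ onto $\{1,2,\dots,n\}$ rather than onto a genuinely repeated multiset. First I would fix $p$- and $q$-distance magic labelings $f_p,f_q$ with magic constants $k_p,k_q$, write $a_i \equiv f_p(x_i)\,(\bmod~p)$ and $b_i \equiv f_q(x_i)\,(\bmod~q)$ for each vertex $x_i$, and let $y_i\,(\bmod~pq)$ be the unique Chinese-remainder solution of $x\equiv a_i\,(\bmod~p)$, $x\equiv b_i\,(\bmod~q)$. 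Setting $f_{pq}(x_i)=y_i$ reproduces exactly the construction of Theorem~\ref{th:crt}.

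The weight computation then carries over unchanged. For each $x_i$, reducing modulo $p$ gives $w(x_i)=\sum_{x_j\in N(x_i)} y_j \equiv \sum_{x_j\in N(x_i)} f_p(x_j)\equiv k_p\,(\bmod~p)$, and likewise $w(x_i)\equiv k_q\,(\bmod~q)$. Thus every vertex weight solves the single system $w\equiv k_p\,(\bmod~p)$, $w\equiv k_q\,(\bmod~q)$, and a final application of the Chinese remainder theorem yields a common value $k_{pq}\,(\bmod~pq)$ shared by all weights. This half of the argument is routine and identical in form to Theorem~\ref{th:crt}.

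The crux — and where the hypothesis of \emph{consistency} is spent — is verifying that $f_{pq}$ is a legitimate labeling, i.e.\ a bijection onto $\{1,2,\dots,n\}$. Consistency supplies precisely the missing range condition: it guarantees $1\le y_i\le n$ for every $i$, so $f_{pq}$ maps $V$ into $\{1,2,\dots,n\}$. Because $pq>n$, the reduction map $m\mapsto(m\,(\bmod~p),\,m\,(\bmod~q))$ is injective on $\{1,2,\dots,n\}$: any two such integers sharing both residues are congruent modulo $pq$, and since their difference is at most $n-1<pq$ they must coincide. It then remains to see that $f_{pq}$ itself is injective, which by the Chinese remainder theorem is equivalent to the $n$ residue pairs $(a_i,b_i)$ being pairwise distinct. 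I expect this bijectivity step to be the main obstacle, since injectivity of $x_i\mapsto y_i$ is not formal — it is exactly what consistency, together with $pq>n$, is there to secure — and I would take care to establish distinctness of the pairs before concluding. Once injectivity is in hand, a cardinality count forces surjectivity onto $\{1,2,\dots,n\}=\{1,2,\dots,n\}_{pq}$, and combining this with the weight computation shows that $f_{pq}$ is the desired $pq$-distance magic labeling.
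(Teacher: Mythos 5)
The paper states this proposition without proof (it explicitly defers to the argument of Theorem~\ref{th:crt}), so there is no official proof to match yours against; judged on its own terms, your proposal reproduces the CRT construction and the weight computation of Theorem~\ref{th:crt} correctly, and you are right that the only genuinely new content is showing that $f_{pq}$ is a bijection onto $\{1,2,\dots,n\}_{pq}=\{1,2,\dots,n\}$. The problem is that you identify this as ``the crux'' and then do not carry it out: you say you ``would take care to establish distinctness of the pairs before concluding,'' but no argument is given, and the implication you lean on --- that consistency (i.e.\ $1\le y_i\le n$ for all $i$) together with $pq>n$ secures injectivity of $x_i\mapsto y_i$ --- is not a formal consequence of the definitions. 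Injectivity is equivalent to the residue pairs $(a_i,b_i)$ being pairwise distinct, but the only constraints the hypotheses impose on these pairs are their two marginal multisets (the first coordinates form $\{1,\dots,n\}_p$ and the second form $\{1,\dots,n\}_q$) plus the range condition on the lifts. These do not force distinctness: for $n=5$, $p=2$, $q=3$ the pair multiset $\{(1,1),(1,1),(2,2),(2,2),(1,3)\}$ has exactly the marginals of $\{1,\dots,5\}$ reduced mod $2$ and mod $3$, and every CRT lift lies in $\{1,2,3\}\subseteq\{1,\dots,5\}$, yet the lifts $1,1,2,2,3$ are not distinct. So a labeling can be ``consistent'' in the paper's sense while $f_{pq}$ fails to be a bijection.

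Concretely, then, the missing piece is one of the following: either strengthen the hypothesis (take ``consistent'' to mean that the $y_i$ are a permutation of $1,\dots,n$, not merely that each lies in $[1,n]$), after which the conclusion is immediate from your weight computation; or supply an argument showing that for labelings $f_p,f_q$ that actually arise as $p$- and $q$-distance magic labelings of a common graph, repeated pairs $(a_i,b_i)$ cannot occur when all lifts land in $\{1,\dots,n\}$. Your proposal does neither, so as written the bijectivity of $f_{pq}$ --- the one step that distinguishes this proposition from Theorem~\ref{th:crt} --- remains unproved.
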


\section{Spectra of the linear operators related to distance magic graphs} \label{sec: matrices}

For a given graph $G$, let $\mu_1, \mu_2, \dots, \mu_t$ be distinct eigenvalues of its adjacency matrix $A$. We call $\mu$ an eigenvalue of $G$ whenever it is an eigenvalue of its adjacency matrix $A$. A graph is considered \textit{singular} if $0$ is the eigenvalue of $G$ and it is not singular otherwise. We always take eigenvalues in non-increasing order $\mu_1 \ge \mu_2 \ge \dots \ge \mu_t$. The largest eigenvalue that is, $\mu_1$, is called \textit{index} of $G$. The eigenvalue is called \textit{simple} if its algebraic multiplicity is $1$. As the adjacency matrix $A$ of a graph $G$ is real symmetric, it is diagonalisable. Therefore, its eigenvectors form an orthonormal basis of $\mathbb{R}^n$. Let $\mathcal{E}(\mu_i)$ be the eigenspace corresponding to the eigenvalue $\mu_i$. For $a \in \mathbb{R}^n$, let $P_i(a) = \sum_{b} \langle a, b \rangle b$, where $b$ is an orthonormal eigenvector corresponding to the eigenvalue $\mu_i$ and $\langle,\rangle$ is a standard inner product on $\mathbb{R}^n$. Let $\n$ be a column vector whose all entries are $1$. The \textit{main angles} of $G$, denoted by $\beta_i$ are given by $\beta_i = \frac{\left\|P_i \n \right\|}{\sqrt{n}}$. Let $J_n$ be an $n \times n$ matrix whose all entries are $1$.

Let $G$ be a distance magic graph on $n$ vertices $v_1, v_2, \dots, v_n$ with distance magic labeling $f$ and the magic constant $k$. Let $x =\left(f(x_1),f(x_2), \dots, f(x_n)\right)^{\top}$. Define a transformation $T:\mathbb{R}^n \rightarrow \mathbb{R}^n$ by $T(e_i) = e_i + e_{i+1} + \dots + e_n$, where $e_1, e_2, \dots, e_n$ is a standard basis of $\mathbb{R}^n$. Then, for some suitable permutation matrix $P$, we have $x = PT\n$. We assume these notations throughout this section.

\smallskip
The following theorem gives the necessary and sufficient condition for a graph to be distance magic in terms of the eigenvalues of the linear transformation $APT$.
\begin{theorem} \label{th: apt}
A graph $G$ is distance magic if and only if $\n$ is an eigenvector of $APT$ corresponding to the eigenvalue $k$ for some suitable permutation matrix $P$.
\end{theorem}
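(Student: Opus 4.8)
The plan is to reduce the eigenvector condition on $APT$ to the familiar matrix form of the distance magic condition, $Ax = k\n$, by first identifying the vector $T\n$ explicitly and then exploiting the freedom in the permutation matrix $P$.

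First I would compute $T\n$. Since $T(e_i) = e_i + e_{i+1} + \dots + e_n$, the matrix $T$ is lower triangular with $T_{ji} = 1$ for $j \ge i$ and $0$ otherwise. Reading off the $j$-th coordinate of $T\n = \sum_i T(e_i)$ gives $(T\n)_j = \sum_{i \le j} 1 = j$, so $T\n = (1, 2, \dots, n)^\top$ is precisely the list of the first $n$ natural numbers. Consequently, for any permutation matrix $P$ the vector $x := PT\n$ is a permutation of $(1, 2, \dots, n)^\top$, hence records a bijective labeling $f \colon V \to \{1, \dots, n\}$ via $f(v_i) = x_i$; conversely, every such labeling arises as $PT\n$ for a suitable $P$. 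I would also restate the matrix form of the magic condition: $f$ is distance magic with constant $k$ exactly when $(Ax)_i = \sum_{v_j \in N(v_i)} f(v_j) = k$ for all $i$, i.e.\ $Ax = k\n$.

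With these two observations the theorem becomes almost immediate. For the forward direction, starting from a distance magic labeling $f$ with constant $k$, I would choose the permutation matrix $P$ with $PT\n = x$, the label vector; then $APT\n = Ax = k\n$, so $\n$ is an eigenvector of $APT$ with eigenvalue $k$. For the converse, assuming $APT\n = k\n$ for some permutation matrix $P$ and scalar $k$, I would set $x := PT\n$; by the paragraph above this is a genuine bijective labeling, and the hypothesis reads $Ax = k\n$, which is exactly the statement that this labeling is distance magic with magic constant $k$.

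The only point needing care---and the one I would emphasize---is the converse: the eigenvector equation by itself merely asserts $A(PT\n) = k\n$, and what makes the argument go through is that $T\n$ is forced to be the sorted tuple $(1, \dots, n)^\top$, so that $PT\n$ is automatically a bijection onto $\{1, \dots, n\}$ rather than an arbitrary real vector. I expect no serious obstacle beyond verifying this bijectivity and noting that the eigenvalue produced coincides with the magic constant.
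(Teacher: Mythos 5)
Your proposal is correct and follows essentially the same route as the paper: the forward direction picks $P$ with $PT\n = x$ so that $APT\n = Ax = k\n$, and the converse sets $x = PT\n$ and reads off $Ax = k\n$. Your explicit verification that $T\n = (1,2,\dots,n)^\top$, so that $PT\n$ is automatically a bijective labeling, is a worthwhile detail the paper leaves implicit, but it does not change the argument.
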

\begin{proof}
Let $G$ be the distance magic graph. Then, we have $Ax = k\n$. Choose $P$ such that $PT\n = x$. This implies $APT\n = Ax = k\n$ as required. Conversely, suppose that $APT$ has an eigenvalue $k$ corresponding to eigenvector $\n$. Then $APT\n = k\n$. Taking $x = PT\n$, we get $Ax = k\n$. This proves that $G$ is a distance magic graph.
\end{proof}

It is known that odd regular graphs can not be distance magic\cite{dm_miller}. In $2$-regular graphs, only disjoint unions of $C_4$ is distance magic\cite{dm_miller}. Distance magic labeling of $C_4$ is shown in Figure \ref{fig: dmp3&c4}. Miklavi\v{c} et al. \cite{dm_4reg_miklavic, dm_6reg_miklavic} gave a characterisation of $4$-regular and $6$-regular graphs. They proved the following general result based on a spectrum of a graph, giving necessary and sufficient conditions for the even regular graph to be distance magic.

\begin{lemma} \cite{dm_4reg_miklavic}  \label{th: dm_even} 
Let $G$ be an even regular graph of order $n$. Then $G$ is distance magic if and only if $0$ is an eigenvalue of $G$ and there exists an eigenvector $w$ for the eigenvalue $0$ with the property that a certain permutation of its entries results in the arithmetic sequence
\begin{equation} \nonumber
\frac{1-n}{2}, \frac{3-n}{2}, \frac{5-n}{2}, \dots, \frac{n-1}{2}.
\end{equation}
In particular, if $G$ is distance magic, then $0$ is an eigenvalue for $G$, and there exists a corresponding eigenvector, all of whose entries are pairwise distinct.
\end{lemma}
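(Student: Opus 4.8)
The plan is to exploit the single most important spectral feature of an $r$-regular graph: the all-ones vector $\n$ is an eigenvector of the adjacency matrix, $A\n = r\n$. The entire lemma then reduces to an affine change of the labeling vector that trades the magic equation $Ax = k\n$ for the null equation $Aw = 0$. First I would pin down the magic constant. Writing $x = (f(v_1), \dots, f(v_n))^\top$ for a distance magic labeling $f$, the magic condition reads $Ax = k\n$. Left-multiplying by $\n^\top$ and using $\n^\top A = (A\n)^\top = r\n^\top$ gives $r\,\n^\top x = kn$; since $\n^\top x = 1 + 2 + \cdots + n = \frac{n(n+1)}{2}$, this forces
\begin{equation} \nonumber
k = \frac{r(n+1)}{2}.
\end{equation}
The even-regularity hypothesis guarantees $k \in \mathbb{Z}$, as it must be; for odd $r$ with $n$ even this quantity is non-integral, which is precisely where the known obstruction to distance magicness of odd regular graphs enters.

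For the forward direction I would set $w = x - \frac{n+1}{2}\n$ and compute
\begin{equation} \nonumber
Aw = Ax - \tfrac{n+1}{2}A\n = k\n - \tfrac{r(n+1)}{2}\n = 0,
\end{equation}
so $w$ is an eigenvector for the eigenvalue $0$. It is nonzero because $\n^\top w = \sum_{j=1}^n \big(j - \tfrac{n+1}{2}\big) = 0$ while its entries are pairwise distinct; in particular $0$ is an eigenvalue of $G$. The entries of $w$ are exactly $j - \frac{n+1}{2} = \frac{2j-n-1}{2}$ as $j$ ranges over the distinct labels $1, 2, \dots, n$, which is a permutation of $\frac{1-n}{2}, \frac{3-n}{2}, \dots, \frac{n-1}{2}$. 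Since these terms are pairwise distinct, the ``in particular'' clause comes for free.

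The converse runs the same computation backwards. Given a $0$-eigenvector $w$ whose entries permute to the stated arithmetic progression, I would set $x = w + \frac{n+1}{2}\n$; its entries are then a permutation of $1, 2, \dots, n$, so $x$ defines a genuine bijective labeling $f$, and
\begin{equation} \nonumber
Ax = Aw + \tfrac{n+1}{2}A\n = 0 + \tfrac{r(n+1)}{2}\n = k\n,
\end{equation}
i.e. $f$ is distance magic. The only subtlety here is checking that the shifted entries land precisely on $\{1, \dots, n\}$, which is immediate from $\frac{2j-n-1}{2} + \frac{n+1}{2} = j$.

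The main obstacle is conceptual rather than computational: recognizing that the correct device is the affine shift by $\frac{n+1}{2}\n$, which annihilates the component of $x$ along the index eigenvector $\n$ and thereby moves the magic labeling into the $0$-eigenspace. Once that shift is identified, both directions are a one-line matrix computation, and the arithmetic-progression description is nothing more than the image of the label set $\{1, \dots, n\}$ under that shift. The role of regularity is essential throughout, since it is exactly the relation $A\n = r\n$ that makes the shift interact cleanly with $A$; the parity of $r$ enters only to keep $k$ integral and to reconcile the statement with the non-existence of distance magic odd regular graphs.
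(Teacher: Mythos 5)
This lemma is quoted in the paper from \cite{dm_4reg_miklavic} without proof, so there is no internal argument to compare against; judged on its own, your proof is correct and is the standard one. The affine shift $w = x - \frac{n+1}{2}\n$ converts the magic equation $Ax = k\n$ (with $k = \frac{r(n+1)}{2}$ forced by pairing against $\n$, using $A\n = r\n$) into $Aw = 0$, the shift is invertible, and the entries of $w$ are exactly the stated arithmetic progression, which gives both directions and the ``in particular'' clause at once. One small phrasing slip: the fact that $w \ne 0$ follows from its entries being pairwise distinct (for $n \ge 2$), not from $\n^\top w = 0$, which by itself is also satisfied by the zero vector.
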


Let $G$ be a graph with vertex set $\{v_1, v_2, \dots, v_n\}$. Let $D = diag(deg(v_1), deg(v_2), \dots, deg(v_n))$ be a degree diagonal matrix of $G$. Laplacian matrix of a graph $G$ is given by $L = D-A$. Now we prove another simple necessary and sufficient condition for connected even regular graphs to be distance magic in terms of the eigenvalues of the matrix $L^{2} + A^{2}$. 

\begin{theorem}
Let $G$ be a connected $r$-regular graph, where $r$ is even. A graph $G$ is distance magic graph with distance magic labeling $f$ if and only if $(L^2 + A^2){x} = r^2 {x}$, where $x = (f(x_1), \dots, f(x_n))^{\top}$.
\end{theorem}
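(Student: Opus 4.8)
The plan is to use $r$-regularity to express $L$ through $A$ and thereby reduce the matrix identity $(L^2+A^2)x=r^2x$ to a single condition on $Ax$. Since $G$ is $r$-regular, the degree matrix is $D=rI$, so $L=rI-A$ and $L^2=r^2I-2rA+A^2$. Hence $L^2+A^2=r^2I-2rA+2A^2$, and after substituting into $(L^2+A^2)x=r^2x$, cancelling $r^2x$, and dividing by $2$, the hypothesis becomes the clean identity $A^2x=rAx$, equivalently $A(Ax-rx)=0$. Both directions will follow from this reformulation together with two standard facts about a connected $r$-regular graph: $A\n=r\n$, and (Perron--Frobenius) $r$ is a simple eigenvalue of $A$ with one-dimensional eigenspace $\operatorname{span}(\n)$.

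For the forward implication I would assume $f$ is distance magic, so $Ax=k\n$ for the magic constant $k$. Then $A^2x=A(k\n)=k(A\n)=kr\n=r(Ax)$, which is exactly $A^2x=rAx$; reversing the algebra above recovers $(L^2+A^2)x=r^2x$. This direction is a routine verification relying only on $A\n=r\n$.

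The substantive direction is the converse. Assuming $A^2x=rAx$, I would rewrite it as $A(Ax)=r(Ax)$, which says that the vector $Ax$ lies in the eigenspace of $A$ for the eigenvalue $r$ (the case $Ax=0$ being included, since $0$ belongs to every eigenspace). The main step is the appeal to simplicity: connectedness of $G$ is precisely what forces this eigenspace to be one-dimensional and spanned by $\n$, ruling out stray eigenvectors of $r$ supported on separate components. It follows that $Ax=k\n$ for some scalar $k$, which is exactly the assertion that $f$ is a distance magic labeling with magic constant $k$. I expect this simplicity argument to be the only genuinely nontrivial point; note that the parity of $r$ is not used in the algebra, so it functions here as the natural setting for the result (odd regular graphs being never distance magic), and the handshake identity $nk=\sum_{v}f(v)\deg(v)=r\,\tfrac{n(n+1)}{2}$ confirms the recovered $k=\tfrac{r(n+1)}{2}$ is a genuine positive magic constant.
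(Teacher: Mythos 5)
Your proposal is correct and follows essentially the same route as the paper: expand $L^2+A^2=r^2I-2rA+2A^2$ using $D=rI$, reduce the identity to $A(Ax)=r(Ax)$, verify the forward direction from $Ax=k\n$ and $A\n=r\n$, and for the converse invoke the Perron--Frobenius simplicity of the eigenvalue $r$ for a connected regular graph to conclude $Ax\in\operatorname{span}(\n)$. Your closing remarks (that the parity of $r$ is not used in the algebra, and the handshake computation of $k$) are accurate additions but not part of the paper's argument.
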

\begin{proof}
Let $G$ be connected $r$-regular distance magic graph with distance magic labeling $f$ and magic constant $k$. Then $A{x} = k\n$. We know $L = D - A$. Hence,
\begin{equation} \label{lap}
    L^{2} + A^{2} = D^{2} - 2rA + 2A^{2} =  r^{2}I - 2rA + 2A^{2}.
\end{equation}
Then
\begin{align*}
    (L^{2} + A^{2})x &= (r^{2}I - 2rA + 2A^{2})x\\
                 &= r^{2}x - 2rk\n + 2rk\n\\
                 &= r^{2}x.
\end{align*}
Conversely suppose that $(L^{2} + A^{2})x = r^{2}x$. Then, after expanding the left-hand side as in Equation (\ref{lap}) and canceling out the terms, we obtain $A^{2}x = rAx$, that is, $(A-rI) Ax = 0$. Now, by the regularity of $G$, the sum of entries of each row in $A-rI$ is $0$, and by connectedness of $G$, $r-1$ is an eigenvalue of $A-rI$ of multiplicity $1$. Therefore, $\{\n\}$ forms a basis for $\mathcal{E}(r-1)$. Note that each entry in $Ax$ is an integer. Therefore, we must have $Ax = k\n$ for some positive integer $k$. Therefore, $G$ is a distance magic graph.
\end{proof}

Now we prove that there are infinitely many singular distance magic graphs. Recall that the \textit{cone cover} of a graph $G$, denoted by $K_1 \triangledown G$, is a graph obtained by adding a vertex to $G$ and joining it to all other vertices of $G$. Kamatchi \cite{kamatchit} characterised the distance magic graphs $G$ with $\Delta(G) = n-1$. 

\begin{theorem} \cite{kamatchit} \label{cone}
Let $G$ be any graph of order $n$ with $\Delta = n - 1$. Then $G$ is a distance magic graph if and only if $n$ is odd and $G$ is isomorphic to the cone cover $K_1 \triangledown (K_{n-1}-M)$ of $K_{n-1}-M$, where $M$ is a perfect matching in $K_{n-1}$.
\end{theorem}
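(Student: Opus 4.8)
The plan is to first pin down the forced label of the dominating vertex and then read the entire adjacency structure off a single summation identity, treating the two directions separately.

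\medskip
\noindent\textbf{Setup and the key identity.} Since $\Delta(G)=n-1$, I would fix a vertex $u$ with $deg(u)=n-1$, so that $u$ is adjacent to every other vertex. Let $f$ be a distance magic labeling with magic constant $k$, and write $S=\frac{n(n+1)}{2}$ for the sum of all labels. Evaluating the weight at $u$ gives $w(u)=S-f(u)=k$, hence $f(u)=S-k$. For any non-apex vertex $v$, partition $V$ into $\{v\}$, the neighbourhood $N(v)$, and the set $\overline{N}(v)$ of non-neighbours of $v$ (all of which lie in $V\setminus\{u\}$, as $u$ dominates). This yields
\[
\sum_{w\in \overline{N}(v)} f(w) \;=\; S-f(v)-w(v) \;=\; S-f(v)-k \;=\; f(u)-f(v).
\]
I would then note that $\overline{N}(v)=\emptyset$ would force $f(v)=f(u)$, impossible; so every non-apex vertex has a non-neighbour, and the identity gives $f(u)\ge f(v)+1>f(v)$ for all $v\ne u$. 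Therefore $f(u)=n$ and $k=\frac{n(n-1)}{2}$.

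\medskip
\noindent\textbf{The structural (only-if) direction.} With $f(u)=n$ the identity reads $f(v)+\sum_{w\in\overline{N}(v)}f(w)=n$ for every non-apex $v$; equivalently, the closed-neighbourhood label sum of $v$ in the complement $\overline{H}$ of $H:=G-u$ is the constant $n$. I would extract a matching by peeling labels from the top. The vertex labelled $n-1$ must have its non-neighbour labels summing to $1$, forcing a \emph{unique} non-neighbour labelled $1$; the vertex labelled $1$ then already reaches its required sum $n-1$ through this partner, so it has no other non-neighbour. Thus $\{n-1,1\}$ is a mutually-exclusive non-adjacent pair that is non-adjacent to nothing else, and deleting it leaves the identity intact for the remaining vertices. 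Iterating pairs label $n-t$ with label $t$ for $t=1,2,\dots$. When $n$ is odd this exhausts $\{1,\dots,n-1\}$ into $\frac{n-1}{2}$ pairs $\{t,n-t\}$, so every non-apex vertex has exactly one non-neighbour: $\overline{H}$ is a perfect matching $M$, whence $H=K_{n-1}-M$ and $G\cong K_1\triangledown(K_{n-1}-M)$. If $n$ were even, the same peeling would leave the single middle label $n/2$ with no remaining partner yet a required non-neighbour sum of $n/2\ne 0$, a contradiction; this is precisely what forces $n$ to be odd.

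\medskip
\noindent\textbf{The converse (construction).} Given $n$ odd and $G=K_1\triangledown(K_{n-1}-M)$, I would exhibit the labeling directly: put $f(u)=n$ on the apex, and label the two ends of each edge of $M$ by a pair $\{j,n-j\}$, letting $j$ run over $1,\dots,\frac{n-1}{2}$. A non-apex vertex $v$ has its $M$-partner as its only non-neighbour, so the identity gives $w(v)=S-(f(v)+f(\text{partner}))=S-n=\frac{n(n-1)}{2}$, while $w(u)=S-f(u)=\frac{n(n-1)}{2}$ as well; hence $f$ is distance magic.

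\medskip
\noindent\textbf{Main obstacle.} The delicate point is the peeling argument: I must verify carefully that at each stage the largest as-yet-unmatched label is \emph{forced} to have a single non-neighbour (of the complementary label), and that already-matched vertices contribute no further non-adjacencies, so the inductive hypothesis is preserved and the process terminates exactly. Folded into this is the parity bookkeeping that rules out even $n$. By contrast, the apex computation in the first paragraph and the explicit construction in the third are routine.
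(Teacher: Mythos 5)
The paper does not prove this statement; it is quoted from Kamatchi \cite{kamatchit} as a known characterisation, so there is no in-paper argument to compare against. Judged on its own, your proof is correct and complete. The key identity $\sum_{w\in\overline{N}(v)}f(w)=f(u)-f(v)$ is right (the apex $u$ always lies in $N(v)$, so the complement decomposition is clean), and it correctly forces $f(u)=n$, $k=\frac{n(n-1)}{2}$, and the uniqueness of the dominating vertex (a second dominating vertex would have empty non-neighbourhood and hence the same label as $u$). The peeling step is the one place that needs care, and your reasoning survives scrutiny: at stage $t$ the unmatched labels available as non-neighbours of the vertex labelled $n-t$ all lie in $\{t,\dots,n-t-1\}$, so a nonempty subset summing to $t$ must be the singleton $\{t\}$, and the partner labelled $t$ then has its required sum $n-t$ already saturated, so no further non-adjacencies are created and the induction hypothesis is preserved; for even $n$ the leftover label $n/2$ needs a positive non-neighbour sum with no candidates remaining, which is exactly the parity obstruction. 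The converse construction is routine and correct. In short, this is a valid self-contained proof of the cited theorem, in the same spirit as Kamatchi's original weight-counting argument.
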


\begin{theorem} \label{th: singular}
For any positive integer $n \ge 3$, there is a singular distance magic graph of order $n$.
\end{theorem}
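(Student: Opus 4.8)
The plan is to split on the parity of $n$ and exhibit an explicit singular distance magic graph in each case, using the cocktail party graph $K_{p\times 2}$ (the complete multipartite graph with $p$ parts of size $2$, equivalently $K_{2p}-M$ for a perfect matching $M$) as the common building block.

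For even $n=2p\ge 4$ I would simply take $G=K_{p\times 2}$. Here the neighbourhood of any vertex $v$ is all of $V(G)$ except $v$ and its unique non-neighbour (partner) $v'$, so $w(v)=\big(\sum_{u}f(u)\big)-f(v)-f(v')$. Thus $G$ is distance magic as soon as every partner-pair receives labels of a common sum, which is achieved by assigning the label pairs $\{i,\,2p+1-i\}$ to the $p$ non-adjacent pairs; every weight then equals $(p-1)(2p+1)$. Since $G$ is $(2p-2)$-regular and $2p-2$ is even, Lemma \ref{th: dm_even} immediately yields that $0$ is an eigenvalue of $G$, so $G$ is singular and we are done for even $n$.

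For odd $n\ge 3$ write $n-1=2p$ and let $H=K_{n-1}-M=K_{p\times 2}$, where $M$ is a perfect matching in $K_{n-1}$ (which exists since $n-1$ is even). By Kamatchi's Theorem \ref{cone} the cone $G=K_1\triangledown H$ is distance magic, so it remains only to prove $G$ is singular. Writing the apex first, the adjacency matrix is
\[
A(G)=\begin{pmatrix} 0 & \n^{\top}\\ \n & A(H)\end{pmatrix}.
\]
The crux is the observation that the zero-eigenvectors of the cocktail party core $H$ are orthogonal to $\n$: for each partner-pair $\{a,b\}$ (an edge of $M$), the rows of $A(H)$ indexed by $a$ and $b$ coincide, since $N_H(a)=V(H)\setminus\{a,b\}=N_H(b)$, whence the vector $v=e_a-e_b$ satisfies $A(H)v=0$ and $\n^{\top}v=0$. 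Appending a zero apex coordinate gives $\tilde v=(0,\,v^{\top})^{\top}$; its image under $A(G)$ has apex entry $\n^{\top}v=0$ and lower block $A(H)v=0$, so $A(G)\tilde v=0$. Hence $0$ is an eigenvalue of $G$ and $G$ is singular; this needs only $p\ge 1$, i.e. $n\ge 3$.

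The main obstacle is entirely in the odd case, and it is precisely this orthogonality to $\n$: it is what lets a zero-eigenvector of $H$ survive the coning operation when the new apex coordinate is set to zero. The even case is essentially free once the complementary-pair labeling is recorded, with singularity supplied directly by Lemma \ref{th: dm_even}.
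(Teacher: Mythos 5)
Your proposal is correct and uses essentially the same constructions as the paper: the cocktail party graph $K_{2p}-M$ with complementary-pair labels for even $n$, and Kamatchi's cone $K_1 \triangledown (K_{n-1}-M)$ for odd $n$. The only cosmetic differences are that the paper certifies singularity in both cases by noting that matched vertices have identical neighbourhoods (hence identical rows in the adjacency matrix), whereas you invoke Lemma \ref{th: dm_even} in the even case and write out the explicit kernel vector $e_a-e_b$ (padded with a zero apex coordinate) in the odd case — the latter being just the kernel vector witnessing those same identical rows.
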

\begin{proof}
Let $G$ be a graph of order $n$. 

\noindent Case i: Let $n \ge 4$ be even and let $V(G) = \{1,2, \dots, n\}$. We may assume that the label of the vertex $i$ is $i$. For each $1 \le i \le n$, let $N(i) = V(G) - \{i, n+1-i\}$. Hence $w(i) = \frac{n(n+1)}{2} - (n+1)$ for all $i = 1,2, \dots, n$. Therefore, we conclude that $G$ is a distance magic graph. Also, by construction, it is clear that $N(1) = N(n)$. Therefore, the rows corresponding to $1$ and $n$ in the adjacency matrix of $G$ are identical. This proves that $G$ must be singular.\\
Case ii: Let $n \ge 3$ be odd. For $n = 3$, we know that $P_3$ is distance magic (see Figure \ref{fig: dmp3&c4}) and it is a singular graph. Suppose $n > 3$. Then there is a distance magic graph $G = K_{n-1}-M + K_{1}$, where $M$ is a perfect matching in $K_{n-1}$. If $uv \in M$ then $N_{K_{n-1}}(u) = N_{K_{n-1}}(v)$. Therefore, $N_{G}(u) = N_{G}(v)$. Proving that $G$ is singular.
\end{proof}

\smallskip
A graph is said to \textit{integral} if all of its eigenvalues are integers and it is not integral otherwise. We shortly prove that the cone cover of $K_{n-1}-M$ for odd $n$ is not integral.

We will calculate the characteristic polynomial of such graphs. There is a relation between the characteristic polynomial of $G$ and its cone cover in terms of the eigenvalues and the main angles of $G$:

\begin{theorem}\cite{cvetkovic} \label{conepoly}
The cone cover of $G$ has characteristic polynomial $P_{K_1 \triangledown G}(y) = P_{G}(y) \left(y - \sum_{i=1}^{k} \frac{n \beta_{i}^2}{y - \mu_i} \right)$, where $\mu_i$ and $\beta_j$ are eigen values and main angles of $G$ respectively.
\end{theorem}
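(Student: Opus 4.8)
The plan is to compute the characteristic polynomial of the cone cover directly from the block structure of its adjacency matrix. Ordering the vertices of $K_1 \triangledown G$ with the apex first, the adjacency matrix and its characteristic pencil read
\[
\tilde A = \begin{pmatrix} 0 & \n^{\top} \\ \n & A \end{pmatrix}, \qquad
yI_{n+1} - \tilde A = \begin{pmatrix} y & -\n^{\top} \\ -\n & yI_n - A \end{pmatrix},
\]
where $A$ is the adjacency matrix of $G$ and $\n$ is the all-ones column vector of length $n$. First I would apply the Schur complement formula for the determinant of a block matrix: whenever $yI_n - A$ is invertible, that is, whenever $y$ is not an eigenvalue of $G$, one obtains
\[
P_{K_1 \triangledown G}(y) = \det(yI_n - A)\,\bigl(y - \n^{\top}(yI_n - A)^{-1}\n\bigr) = P_G(y)\,\bigl(y - \n^{\top}(yI_n - A)^{-1}\n\bigr).
\]
This already isolates the factor $P_G(y)$ and reduces the whole problem to evaluating the single scalar $\n^{\top}(yI_n - A)^{-1}\n$.

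The heart of the argument is to express this resolvent quadratic form through the spectral data of $G$. Since $A$ is real symmetric, it admits the spectral resolution $A = \sum_{i=1}^{t} \mu_i P_i$, where $P_i$ is the orthogonal projection onto the eigenspace $\mathcal{E}(\mu_i)$ introduced earlier, and hence $(yI_n - A)^{-1} = \sum_{i=1}^{t} (y - \mu_i)^{-1} P_i$. Substituting this and using that each $P_i$ is an orthogonal projection, so $P_i = P_i^{2} = P_i^{\top}P_i$, I would compute
\[
\n^{\top}(yI_n - A)^{-1}\n = \sum_{i=1}^{t} \frac{\n^{\top} P_i \n}{y - \mu_i} = \sum_{i=1}^{t} \frac{\lVert P_i \n \rVert^{2}}{y - \mu_i}.
\]
By the very definition of the main angle, $\beta_i = \lVert P_i \n \rVert / \sqrt{n}$, so $\lVert P_i \n \rVert^{2} = n\beta_i^{2}$, and inserting this gives precisely the claimed rational factor $y - \sum_i \frac{n\beta_i^{2}}{y - \mu_i}$.

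Finally I would dispose of the one point requiring care. The displayed identity is derived only for $y$ outside the spectrum of $G$, since the Schur complement step demands that $yI_n - A$ be invertible. Nevertheless, the left-hand side $P_{K_1 \triangledown G}(y)$ is a genuine polynomial, and on the right the factor $P_G(y) = \prod_i (y - \mu_i)^{m_i}$ cancels every pole of the rational part, so the right-hand side is a polynomial as well; two polynomials that agree on the cofinite set $\mathbb{R} \setminus \{\mu_1, \dots, \mu_t\}$ must coincide identically. I expect this bookkeeping — confirming that the product is truly a polynomial rather than a rational function carrying spurious poles at the $\mu_i$ — to be the only delicate step, the remainder being the routine block-determinant evaluation together with the spectral theorem.
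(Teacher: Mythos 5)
Your proof is correct. The paper itself gives no proof of this theorem --- it is quoted from the literature via \cite{cvetkovic} --- and your argument is the standard derivation of that result: the Schur-complement block-determinant identity isolating $P_G(y)$, the resolvent expansion $(yI_n - A)^{-1} = \sum_{i}(y-\mu_i)^{-1}P_i$, the identification $\n^{\top}P_i\n = \lVert P_i\n\rVert^{2} = n\beta_i^{2}$ from the definition of the main angles, and the extension from the cofinite set where $yI_n - A$ is invertible to all of $\mathbb{R}$ are each handled correctly.
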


Therefore, to calculate the characteristic polynomial of $K_1 \triangledown (K_{n-1}-M)$, we need the information about the spectra and main angles of $K_{n-1}-M$.

\begin{proposition} \label{conespectra}
Let $n \ge 4$ be even, and let $K_n$ be the complete graph on the $n$ vertices $1,2, \dots, n$. Let $M$ be a perfect matching in $K_n$ such that if $e \in M$ then $e = (i, i + \frac{n}{2})$. Then 
\begin{equation*}
\mathrm{spec}(K_n - M) =
    \begin{pmatrix}
    n-2 & 0 & -2\\
    1 & \frac{n}{2} & \frac{n}{2} - 1
\end{pmatrix}.
\end{equation*}
\end{proposition}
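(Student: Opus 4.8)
The plan is to write the adjacency matrix of $K_n - M$ in a form that exposes a large commuting family of matrices, and then read off the whole spectrum from a single simultaneous eigenbasis. Set $m = n/2$ and let $P$ be the permutation matrix of the fixed-point-free involution $\sigma$ that interchanges $i$ and $i + m$; then $P$ is exactly the adjacency matrix of the matching $M$. Since the adjacency matrix of $K_n$ is $J_n - I$, deleting the edges of $M$ gives
\[
A = J_n - I - P.
\]
First I would record the structural facts that make everything commute: $P$ is symmetric with $P^2 = I$ (because $\sigma$ is an involution), and, since $P$ is a permutation matrix, each row and column of $J_n$ is merely rearranged, so $J_n P = P J_n = J_n$. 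Hence $J_n$, $I$, and $P$ pairwise commute, and $A$ is diagonalized by any basis that simultaneously diagonalizes $J_n$ and $P$.

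Next I would build that basis from the $\pm 1$ eigenspaces of $P$. Decompose $\mathbb{R}^n = S \oplus T$, where $S$ (the $+1$-eigenspace of $P$) consists of vectors constant on each matched pair $\{i, i+m\}$, and $T$ (the $-1$-eigenspace) consists of vectors negated across each pair; both have dimension $m$. Every vector in $T$ has zero coordinate sum, so $J_n$ annihilates $T$, and on $T$ we compute $A v = J_n v - v - P v = 0 - v + v = 0$, giving the eigenvalue $0$ with multiplicity $m = n/2$. On $S$ we have $P v = v$, so $A v = J_n v - 2v$; splitting $S = \langle \n \rangle \oplus S_0$, where $S_0$ is the space of symmetric zero-sum vectors, yields $A \n = (n-2)\n$ (eigenvalue $n-2$, multiplicity $1$) and $A v = -2 v$ on $S_0$ (eigenvalue $-2$, multiplicity $m-1 = n/2 - 1$).

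Finally I would check the bookkeeping: the multiplicities sum to $1 + n/2 + (n/2 - 1) = n$, confirming that this is the full spectrum and that it matches the claimed table. I do not expect any serious obstacle here, since the entire argument reduces to routine linear algebra once the decomposition $A = J_n - I - P$ is in place. The only points requiring care are the index arithmetic defining the pairing (so that $P$ is genuinely a symmetric, fixed-point-free involution for even $n$) and the observation that $T$ lies in the kernel of $J_n$; both are immediate from the definition of a perfect matching. As an alternative one could recognize $K_n - M$ as the cocktail-party graph $K_{m \times 2}$ and invoke the known spectrum of complete multipartite graphs, but the commuting-matrices argument above is self-contained and keeps the multiplicities fully transparent.
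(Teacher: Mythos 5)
Your proof is correct, but it takes a genuinely different route from the paper's. The paper works eigenvector-by-eigenvector: it gets the simple eigenvalue $n-2$ from connectivity and $(n-2)$-regularity, gets nullity at least $\frac{n}{2}$ from the duplicated rows $N(i)=N(i+\frac{n}{2})$ together with the explicit kernel vectors $v_j$, then writes down $\frac{n}{2}-1$ explicit eigenvectors $w_j$ for $-2$, and finally pins down the exact multiplicity of $-2$ by a dimension count against the $\frac{n}{2}+1$ eigenvalues already found. You instead start from the global decomposition $A = J_n - I - P$ with $P$ the involution matrix, observe that $J_n$, $I$, $P$ commute, and read the whole spectrum off the simultaneous eigenspace decomposition $\mathbb{R}^n = \langle \n \rangle \oplus S_0 \oplus T$. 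The two arguments land on the same eigenspaces (the paper's $v_j$ span your $T$ and its $w_j$ span your $S_0$), but your version gets the exact multiplicities for free --- the three subspaces visibly exhaust $\mathbb{R}^n$ --- and dispenses with both the ``easy to verify'' linear-independence check for the $w_j$ and the closing counting step; it also makes transparent that $K_n - M$ is the cocktail-party graph. The paper's version has the mild advantage of producing the explicit eigenvectors $v_j$ and $w_j$ in coordinates, which it immediately reuses in the subsequent computation of the main angles $\beta_i$, so if your proof were substituted in, that later calculation would need the eigenbases spelled out anyway.
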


\begin{proof}
Deletion of a maximum matching in a complete graph $K_n$ $(n \ge 4)$ does not disconnect the graph. Therefore $G = K_{n}-M$ is a connected $(n-2)$-regular graph. Therefore, $(n-2)$ is a simple eigenvalue of $G$. Also, in $G$, $N(i) = N(i + \frac{n}{2})$. Therefore, the corresponding rows in the adjacency matrix are identical, and hence, the eigenvalue $0$ has multiplicity $\frac{n}{2}$. All $\frac{n}{2}$ eigenvectors in the basis of $\mathcal{E}(0)$ are given by $v_j$, where for each $1 \le j \le \frac{n}{2}$, $i$th coordinate of the vector $v_{j}$ is given by
\begin{align*}
    (v_j)_{i} = 
    \begin{cases}
    -1 &\text{if } i = j\\
    1 &\text{if } i = j + \frac{n}{2}\\
    0 &\text{otherwise}.
    \end{cases}
\end{align*}
For each $2 \le j \le (\frac{n}{2})$, consider a vector $w_{j}$ whose $i$th coordinate is given by
\begin{equation*}
(w_{j})_{i} = \begin{cases}
                    -1 &\text{if } i = 1, \frac{n}{2}+1\\
                    1 &\text{if } i = j, \frac{n}{2}+j\\
                    0 &\text{otherwise}.
                    \end{cases}     
\end{equation*}
It is easy to verify that these $\frac{n}{2}-1$ vectors $w_j$ are linearly independent and are the eigenvectors corresponding to eigenvalue $-2$. Therefore, the multiplicity of $-2$ is at least $\frac{n}{2}-1$. We already found $\frac{n}{2}+1$ other eigenvalues of $G$. Therefore algebraic multiplicity of the eigenvalue $-2$ must be $\frac{n}{2}-1$. This completes the proof.
\end{proof}

From the spectra of the graph $K_{n}-M$ as discussed previously, we can write its characteristic polynomial, $P_{K_{n}-M}(y) = y^{\frac{n}{2}}(y+2)^{\frac{n}{2}-1}(y-n+2)$. Now we calculate the main angles $\beta_i$ of $K_{n}-M$.\\

\smallskip
 Let $y = (y_1, y_2, \dots, y_n)^{\top} \in \mathbb{R}^{n}$. Recall that to calculate $P_i(y)$ we take orthonormal vectors from $\mathcal{E}(\mu_i)$. The only eigenvector corresponding to eigenvalue $(n-2)$ is $\n$. Therefore,
\begin{align*}
P_1(y) &= \left\langle y, \frac{\n}{\sqrt{n}} \right \rangle \frac{\n}{\sqrt{n}}\\
&= \frac{1}{n} (y_1 + y_2 + \dots + y_n,y_1 + y_2 + \dots + y_n, \dots, y_1 + y_2 + \dots + y_n).
\end{align*}
Therefore, the matrix representation of $P_1$ with respect to the standard basis of $\mathbb{R}^n$ is the matrix $J_n$. Therefore, $\left\|P_1 \n \right\| = 1$ and hence $\beta_1 = \frac{1}{\sqrt{n}}$. Now for any $v \in \mathcal{E}(0)$, $\left\| v \right\| = \sqrt{2}$.
\begin{align*}
 P_2(y) 
 &= \frac{1}{2} \sum_{v \in \mathcal{E}(0)} \langle y, v \rangle v\\
 &= \frac{1}{2} \sum_{j=1}^{\frac{n}{2}} (-y_{1} + y_{j+\frac{n}{2}}) v\\
 &= \frac{1}{2} (y_1 - y_{1+\frac{n}{2}}, y_2 - y_{2 + \frac{n}{2}}, \dots, y_{\frac{n}{2}} - y_{\frac{n}{2}+\frac{n}{2}}, - y_1 + y_{1+\frac{n}{2}}, - y_2 + y_{2 + \frac{n}{2}}, \dots, - y_{\frac{n}{2}} + y_{\frac{n}{2}+\frac{n}{2}}).
\end{align*}
Observe that the sum of elements in each row of the matrix representation of $P_2$ with respect to the standard basis of $\mathbb{R}^n$ is $0$. Therefore $\left\| P_2 \n \right\| = 0$ and hence $\beta_2 = 0$.
For any $v \in \mathcal{E}(-2)$, $\left\|v \right\| = \frac{1}{2}$.
\begin{align*}
 P_3(y)
 &= \frac{1}{4}\sum_{w \in \mathcal{E}(-2)} \langle y, w \rangle w\\
 &= \frac{1}{4}\sum_{j=2}^{\frac{n}{2}} (-y_{1} + y_{j} - y_{1+\frac{n}{2}} + y_{j+\frac{n}{2}}) w_j = w,
\end{align*}
where $i$th coordinate of $w$ is given by
\begin{equation*}
    w = \begin{cases}
                (\frac{n}{2}-1) y_1 - y_2- \dots - y_{\frac{n}{2}} + (\frac{n}{2}-1) y_{\frac{n}{2}+1} - y_{\frac{n}{2}+2} - \dots - y_n & \quad \text{ if } i = 1, \frac{n}{2}+1\\
                -y_1 + y_{i} - y_{\frac{n}{2}+1} + y_{\frac{n}{2}+i} & \quad \text{ otherwise}
              \end{cases}
\end{equation*}
Again, observe that the sum of elements in each row of matrix representation of $P_3$ with respect to the standard basis of $\mathbb{R}^n$ is $0$. Therefore $\left\| P_3 \n \right\| = 0$ and hence $\beta_3 = 0$.\\

If $n=3$, then $K_1 \triangledown (K_{n-1}-M) \cong P_3$ and 

\begin{equation*}
\mathrm{spec}(P_3) = 
\begin{pmatrix}
\sqrt{2} & 0 & -\sqrt{2}\\
1 & 1 & 1
\end{pmatrix}.  
\end{equation*} 

Therefore, $P_3$ is not integral. Let $n \ge 5$ be an odd integer.
By Theorem \ref{conepoly}, we have,
\begin{equation} \label{eq:conepoly} 
P_{K_1 \triangledown (K_{n}-M)}(y) = P_{K_{n}-M}(y) \left(y - \sum_{i=1}^{k} \frac{n \beta_{i}^2}{y - \mu_i} \right).
\end{equation}
By Proposition \ref{conespectra}, distinct eigenvalues of $K_{m}-M$, where $m = n-1$ are $m-2, 0$ and $-2$. Also the main angles of $K_{m}-M$ are, $\beta_{1} = \frac{1}{\sqrt{m}}, \beta_2 = \beta_3 = 0$ and $P_{K_{m}-M}(y) = y^{\frac{m}{2}}(y+2)^{\frac{m}{2}-1}(y-m+2)$. Therefore, the Equation (\ref{eq:conepoly}) becomes
\begin{align*}
P_{K_1 \triangledown (K_{m}-M)} (y)
&= y^{\frac{m}{2}}(y+2)^{\frac{m}{2}-1}(y-m+2) \left(y - \frac{m \left(\frac{1}{\sqrt{m}}\right)^2}{y-(m-2)} \right)\\
&= y^{\frac{m}{2}}(y+2)^{\frac{m}{2}-1} (y^2 + (2-m)y -1).
\end{align*}
\begin{proposition}
If $G$ is a distance magic graph of order $n$ with $\Delta(G) = n-1$, then $G$ is not integral.
\end{proposition}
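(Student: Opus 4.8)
The plan is to invoke Kamatchi's characterisation (Theorem \ref{cone}) to collapse the hypothesis into a single concrete family, and then read off non-integrality directly from the characteristic polynomial already computed just above this proposition. Since $G$ is distance magic with $\Delta(G) = n-1$, Theorem \ref{cone} forces $n$ to be odd and $G \cong K_1 \triangledown (K_{n-1} - M)$ for a perfect matching $M$ of $K_{n-1}$. Thus it suffices to show that this cone cover is never integral.

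First I would dispose of the base case $n = 3$ separately: here $G \cong P_3$, whose spectrum $\{\sqrt{2}, 0, -\sqrt{2}\}$ (displayed above) contains the irrational number $\sqrt{2}$, so $G$ is not integral. For $n \ge 5$, write $m = n-1$, which is even with $m \ge 4$, so that Proposition \ref{conespectra} applies and the characteristic polynomial computed above reads
\begin{equation*}
P_{K_1 \triangledown (K_m - M)}(y) = y^{\frac{m}{2}}(y+2)^{\frac{m}{2}-1}\bigl(y^2 + (2-m)y - 1\bigr).
\end{equation*}
The factors $y^{m/2}$ and $(y+2)^{m/2-1}$ contribute only the integer eigenvalues $0$ and $-2$, so the integrality of $G$ hinges entirely on the two roots of the quadratic $y^2 + (2-m)y - 1$, namely $y = \tfrac{1}{2}\bigl((m-2) \pm \sqrt{(m-2)^2 + 4}\bigr)$.

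The crux is therefore to show that the discriminant $(m-2)^2 + 4$ is not a perfect square for $m \ge 4$, which forces the two roots to be irrational. I would establish this by squeezing between consecutive squares: for $m \ge 4$ one has $2(m-2)+1 = 2m-3 > 4$, whence
\begin{equation*}
(m-2)^2 < (m-2)^2 + 4 < (m-2)^2 + 2(m-2) + 1 = (m-1)^2,
\end{equation*}
so $(m-2)^2 + 4$ lies strictly between the consecutive perfect squares $(m-2)^2$ and $(m-1)^2$ and hence is not a square itself. Consequently the quadratic has irrational roots and $G$ is not integral.

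I do not expect a genuine obstacle: once Theorem \ref{cone} and the precomputed characteristic polynomial are in hand, the argument reduces to an elementary number-theoretic squeeze. The only point requiring care is the degenerate case $m = 2$ (that is, $n = 3$), where Proposition \ref{conespectra} and its main-angle computation do not apply — this is precisely why the base case must be handled by direct inspection of the spectrum of $P_3$ rather than through the polynomial formula.
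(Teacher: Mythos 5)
Your proposal is correct and follows essentially the same route as the paper: invoke Kamatchi's characterisation to reduce to $K_1 \triangledown (K_{n-1}-M)$, treat $n=3$ via the spectrum of $P_3$, and for $n \ge 5$ show the quadratic factor $y^2+(2-m)y-1$ of the precomputed characteristic polynomial has irrational roots because its discriminant is not a perfect square. If anything, your squeeze $(m-2)^2 < (m-2)^2+4 < (m-1)^2$ is cleaner than the paper's own discriminant computation, which contains arithmetic slips (it writes $\sqrt{c^2+1}$ for the root formula and then $d \pm \sqrt{4d^2+1}$ after setting $c=2d$, where the correct expressions are $\frac{c \pm \sqrt{c^2+4}}{2} = d \pm \sqrt{d^2+1}$), though the underlying idea is identical.
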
  
\begin{proof}
We know the if $G$ is distance magic graph of order $n$ with $\Delta(G) = n-1$ then $n$ is odd and $G$ is isomorphic to a cone cover of $K_{m}-M$, where $m = n-1$ and its characteristic polynomial is $P_{K_1 \triangledown (K_{m}-M)} (y) = y^{\frac{m}{2}}(y+2)^{\frac{m}{2}-1} (y^2 + (2-m)y -1)$. Consider the factor $y^{\frac{m}{2}}(y+2)^{\frac{m}{2}-1}$. If $n = 3$, we know that $P_3$ is not integral as $\sqrt{2}$ is its eigenvalue. Let us assume that $n \ge 5$. With $c = m-2 \ge 2$ even, we rewrite the polynomial $y^2 - (m-2)y - 1$ as $y^2 - cy - 1$. The roots of this polynomial are: $y = \frac{c \pm \sqrt{c^2 + 1}}{2}$. Again, since $c$ is even, we put $c = 2d$. Then the roots become $y = d \pm \sqrt{4d^2 + 1}$. But $4d^2+1$ is not a perfect square. If $4d^{2}+1 = a^{2}$ for some integer $a$, then we get $(2d-a)(2d+a) = 1$, which is not possible as $a$ and $d$ both are integers. Therefore, the cone cover of $G$ has a non-integer eigenvalue as required. This completes the proof.
\end{proof}

Up to the isomorphism, there are only $23$ distance magic graphs of order up to $10$ \cite{dm_algo_fuad, dm_algo1}. It can be manually checked that in any distance magic graph $G$ of order up to $10$, there exist vertices $u,v$ such that $N(u) = N(v)$. Therefore, the rows in the adjacency matrix of $G$ corresponding to the vertices $u$ and $v$ are identical. From this, we conclude that such a graph $G$ must be singular. Therefore, every distance magic graph of an order up to $10$ is singular. The nonsingular distance magic graph of the smallest order (on the $11$ vertices) is shown in Figure \ref{fig: ndm}. Also, as discussed earlier, there is at least one singular distance magic graph of order $n$ for each $n \ge 3$. These observations shifts our attention to the generalised inverse of the adjacency matrix of distance magic graphs.

\begin{figure}[ht]
\centering
\begin{tikzpicture}
  \foreach \i in {1,...,11}
    \node[circle, draw, inner sep=2pt] (\i) at ({360/11 * (\i-1)}:2) {\i};
  \draw (1) -- (2);
  \draw (1) -- (5);
  \draw (1) -- (6);
  \draw (1) -- (7);
  \draw (1) -- (11);

  \draw (2) to[bend left = 10] (4);
  \draw (2) -- (7);
  \draw (2) -- (9);
  \draw (2) -- (10);

  \draw (3) -- (6);
  \draw (3) -- (7);
  \draw (3) -- (8);
  \draw (3) -- (10);

  \draw (4) -- (5);
  \draw (4) -- (7);
  \draw (4) -- (8);
  \draw (4) -- (9);

  \draw (5) to[bend left = 10] (7);
  \draw (5) -- (8);
  \draw (5) -- (11);

  \draw (6) -- (7);
  \draw (6) -- (9);
  \draw (6) -- (11);

  \draw (7) -- (10);

  \draw (8) -- (9);
  
  \draw (8) to[bend left = 10] (10);

  \draw (9) to[bend left = 12] (11);

  \draw (10) -- (11);
\end{tikzpicture}    
\caption{Non-singular distance magic graph of the smallest order}
\label{fig: ndm}
\end{figure}
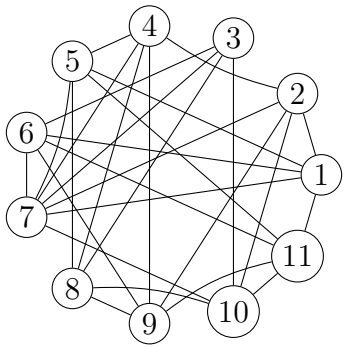

\smallskip
A matrix $B^{\dagger} \in \mathbb{R}^{n \times m}$ is said to be the \textit{Moore-Penrose} (or the \textit{generalised}) \textit{inverse} of a matrix $B \in \mathbb{R}^{m \times n}$ if $BB^{\dagger}B = B$, $B^{\dagger}BB^{\dagger} = B^{\dagger}$, $(BB^{\dagger})^{*} = BB^{\dagger}$ and $(B^{\dagger}B)^{*} = B^{\dagger}B$. We give another necessary condition for a graph to be distance magic in terms of the generalised inverse of its adjacency matrix. Recall that a \textit{doubly stochastic matrix} is a matrix whose each row and each column sum to one.
\begin{theorem}
If $G$ is distance magic, then $AA^{\dagger}$ is doubly stochastic, where $A^{\dagger}$ is the generalised inverse of an adjacency matrix $A$ of $G$.
\end{theorem}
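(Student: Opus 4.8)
The plan is to combine the two defining Penrose identities of $A^{\dagger}$ with the magic equation $Ax = k\n$, where $x=(f(x_1),\dots,f(x_n))^{\top}$ is the labeling vector and $k$ the magic constant. In the sense defined above, ``doubly stochastic'' asks only that all row sums and all column sums equal $1$, so it suffices to establish the two vector identities $AA^{\dagger}\n=\n$ and $\n^{\top}AA^{\dagger}=\n^{\top}$. First I would note that a distance magic graph carrying an edge has every vertex adjacent to some positively labelled vertex, so the magic constant satisfies $k\ge 1$; hence $\n=\tfrac1k Ax$ lies in the column space of $A$, which is exactly the hook that lets the Penrose identities act.

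For the row sums, I would compute $AA^{\dagger}\n$ by substituting $\n=\tfrac1k Ax$ and applying the first Penrose identity $AA^{\dagger}A=A$:
\[
AA^{\dagger}\n \;=\; \tfrac1k\,AA^{\dagger}Ax \;=\; \tfrac1k\,Ax \;=\; \n .
\]
For the column sums, the relevant vector is $\n^{\top}AA^{\dagger}$, that is $(AA^{\dagger})^{\top}\n$. Since $A$ is a real symmetric matrix, the Penrose symmetry condition $(AA^{\dagger})^{*}=AA^{\dagger}$ reads $(AA^{\dagger})^{\top}=AA^{\dagger}$, so $(AA^{\dagger})^{\top}\n=AA^{\dagger}\n=\n$ by the previous line. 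Together these two identities say precisely that $AA^{\dagger}$ is doubly stochastic.

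The step I would be most careful about is the meaning of ``doubly stochastic'' together with one degenerate case, rather than any hard computation. The matrix $AA^{\dagger}$ is the orthogonal projector onto the range of $A$ and generically has negative entries, so the statement can only be read with the definition given above (row and column sums equal to one) and not with an added nonnegativity requirement; I would flag this explicitly to forestall a false reading. The remaining point to secure is $k\neq 0$, which is needed to divide: an edgeless graph would give $A=0$ and $AA^{\dagger}=0$, which is not doubly stochastic, but such a graph is excluded because a distance magic graph cannot simultaneously contain an edge (weight $\ge 1$) and an isolated vertex (weight $0$). Hence any graph with at least one edge has positive magic constant and the argument goes through.
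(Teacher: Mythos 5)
Your proof is correct and essentially identical to the paper's: both apply the Penrose identity $AA^{\dagger}A=A$ to the magic equation $Ax=k\n$ to obtain $AA^{\dagger}\n=\n$, and both then invoke the symmetry of $AA^{\dagger}$ to get the column sums. Your added remarks on why $k\neq 0$ and on reading ``doubly stochastic'' as a row/column-sum condition only are sensible precautions that the paper leaves implicit.
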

\begin{proof}
Let $G$ be the distance magic graph with magic constant $k$. Then $Ax = k\n$.
	\begin{align*}
		&\implies AA^{\dagger}Ax = kAA^{\dagger} \n\\
		&\implies Ax = kAA^{\dagger} \n\\
		&\implies k\n = kAA^{\dagger} \n\\
		&\implies \n = AA^{\dagger} \n.
	\end{align*}
This shows that each row in $AA^{\dagger}$ sum to $1$. Since $AA^{\dagger}$ is symmetric, column sums are also $1$. This proves the theorem.
\end{proof}
The converse of the above theorem is not true. For example, take $G = K_{1,3}$ and let $A$ be its adjacency matrix. Then
\[
\begin{array}{cc}
\begin{subarray}{c}
A^{\dagger} = 
\begin{pmatrix}
    \frac{1}{3} & 0 & 0 & 0\\
    \frac{1}{3} & 0 & 0 & 0\\
    \frac{1}{3} & 0 & 0 & 0\\
    0 & \frac{1}{3} & \frac{1}{3} & \frac{1}{3}
\end{pmatrix}
\end{subarray}
\hspace{1 cm}
\begin{subarray}{c}
AA^{\dagger} = 
\begin{pmatrix}
    1 & 0 & 0 & 0\\
    0 & \frac{1}{3} & \frac{1}{3} & \frac{1}{3} \\
    0 & \frac{1}{3} & \frac{1}{3} & \frac{1}{3} \\
    0 & \frac{1}{3} & \frac{1}{3} & \frac{1}{3}
\end{pmatrix}

\end{subarray}

\end{array}
\]
and $\n$ is an eigenvector of $AA^{\dagger}$ corresponding to eigenvalue $1$, but $G$ is not distance magic. 

\section{Action of the automorphism group of the graph on its labeling set} \label{sec: action}
Let $G$ be a graph on $n$ vertices, and let $Aut(G)$ be the group of automorphisms of $G$. Let $\mathcal{M}(G) = \{f : f \text{ is distance magic labeling of } G\}$ be set of distance magic labelings of $G$. 

\begin{theorem} \label{th: bound}
A distance magic graph \(G\) has at least \(|Aut(G)|\) distance magic labelings.
\end{theorem}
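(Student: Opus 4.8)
The plan is to exhibit an explicit injection from $Aut(G)$ into the set $\mathcal{M}(G)$ of distance magic labelings; this immediately yields $|\mathcal{M}(G)| \ge |Aut(G)|$. Since $G$ is distance magic, I would fix one distance magic labeling $f_0 \in \mathcal{M}(G)$ with magic constant $k$ as a basepoint, and for each automorphism $\sigma \in Aut(G)$ set $\Phi(\sigma) = f_0 \circ \sigma$. Conceptually this is the orbit map of the left action of $Aut(G)$ on $\mathcal{M}(G)$ given by $\sigma \cdot f = f \circ \sigma^{-1}$, evaluated at $f_0$; the argument then amounts to showing this action is well defined and that the stabilizer of $f_0$ is trivial.

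First I would verify that $\Phi(\sigma)$ really lands in $\mathcal{M}(G)$, i.e. that $f_0 \circ \sigma$ is again a distance magic labeling. It is a bijection $V \to \{1, \dots, n\}$, being a composition of bijections. For the weight condition, fix $v \in V$ and compute
\begin{align*}
\sum_{u \in N(v)} (f_0 \circ \sigma)(u) = \sum_{u \in N(v)} f_0(\sigma(u)).
\end{align*}
The only structural input required is that an automorphism carries neighborhoods to neighborhoods, $\sigma(N(v)) = N(\sigma(v))$; hence as $u$ ranges over $N(v)$, the image $\sigma(u)$ ranges bijectively over $N(\sigma(v))$, so the sum equals $\sum_{w \in N(\sigma(v))} f_0(w) = k$ because $f_0$ is distance magic. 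Thus $f_0 \circ \sigma$ has constant weight $k$ and belongs to $\mathcal{M}(G)$.

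Next I would establish injectivity of $\Phi$. If $\Phi(\sigma) = \Phi(\tau)$, then $f_0(\sigma(v)) = f_0(\tau(v))$ for every $v \in V$, and since $f_0$ is a bijection (hence injective), this forces $\sigma(v) = \tau(v)$ for all $v$, so $\sigma = \tau$. Equivalently, the stabilizer of $f_0$ is trivial: $f_0 \circ \sigma = f_0$ implies $\sigma = \mathrm{id}$. Injectivity of $\Phi$ then gives $|\mathcal{M}(G)| \ge |\Phi(Aut(G))| = |Aut(G)|$, which is the claim.

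I do not expect a genuine obstacle: the whole content is the interplay of two bijective structures, with the automorphism property $\sigma(N(v)) = N(\sigma(v))$ doing the work of preserving the magic constant and the bijectivity of $f_0$ doing the work of injectivity. The one step to state carefully is well-definedness, namely that precomposing a labeling with an automorphism preserves the distance magic property, since that is exactly what guarantees the image of $\Phi$ lands inside $\mathcal{M}(G)$ rather than merely in the larger set of all bijective labelings.
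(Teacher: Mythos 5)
Your proof is correct and takes essentially the same route as the paper: the paper defines the action $\sigma\cdot f = f\circ\sigma^{-1}$ on the set of labelings, checks well-definedness via $\sigma(N(v)) = N(\sigma(v))$, and shows the stabilizer is trivial, exactly as you do with your orbit map $\Phi(\sigma) = f_0\circ\sigma$. The only difference is cosmetic: the paper finishes with the orbit--stabilizer theorem, which additionally yields that $|Aut(G)|$ divides $|\mathcal{M}(G)|$, a slightly stronger conclusion than the bare inequality your injection provides.
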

\begin{proof}
Let $G$ be a graph with $V(G) = \{1,2, \dots, n\}$ and  $\mathcal{M}$ be a set of all distance magic labelings of a graph $G$. Define action of $Aut(G)$ on $\mathcal{M}$ by $(\sigma \cdot f)(i) = f\sigma^{-1}(i)$, for all $f \in \mathcal{M}$ and for all $i \in V$. First, we will show that $\sigma \cdot f \in \mathcal{M}$ for all $f$ and all $\sigma$. For $v \in V$, let $N(v) = \{u_1, u_2, \dots, u_t\}$. Since $f \in \mathcal{M}$, $w_{f}(v) = k$ and since $\sigma \in Aut(G)$, $u_i \in N(v)$ implies $\sigma^{-1}(u_i) \in N(\sigma^{-1}(v))$. 
\begin{align*}
w_{ \sigma \cdot f}(v) 
&= \sum_{u \in N(v)} \sigma \cdot f (u)\\
&=  \sum_{u \in N(v)}  f \sigma^{-1} (u)\\
&= \sum_{x \in N(\sigma^{-1}(v))}f(x)\\
&= k.
\end{align*}
Now it is easy to check that $Aut(G)$ acts on $\mathcal{M}$ under the above action. For $f \in \mathcal{M}$, let $\mathcal{S}_f$ denotes the stabiliser of $f$ and let $\mathcal{O}_f$ denotes the orbit of $f$. Let $\sigma \cdot f = f$ for some $\sigma \in Aut(G)$. Then $f\sigma^{-1} = f$ i.e. $f\sigma^{-1}(x) = f(x)$ for all $x \in V$. Since both $f$ and $\sigma$ are bijective functions, we conclude that $\sigma \equiv 1$, the identity automorphism. This proves that $|\mathcal{S}_f| = 1$. Therefore by orbit stabiliser theorem, $|Aut(G)| = |\mathcal{O}_f|$. As orbits of elements of $\mathcal{M}$ partitions $\mathcal{M}$, each of which has same size $|Aut(G)|$, $|Aut(G)|$ divides $|\mathcal{M}|$. This completes the proof.
\end{proof}

\begin{figure}[h]
\centering
\begin{tikzpicture}
    \node[circle, draw, inner sep=2pt] (a) at (0,1) {$1$};
    \node[circle, draw, inner sep=2pt] (b) at (2,1) {$3$};
    \node[circle, draw, inner sep=2pt] (c) at (4,1) {$2$};

    \node[circle, draw, inner sep=2pt] (d) at (6,0) {$1$};
    \node[circle, draw, inner sep=2pt] (e) at (6,2) {$2$};
    \node[circle, draw, inner sep=2pt] (f) at (8,2) {$4$};
    \node[circle, draw, inner sep=2pt] (g) at (8,0) {$3$};

    \draw (a) -- (b) -- (c);
    \draw (d) -- (e) -- (f) -- (g) -- (d);
\end{tikzpicture}
\caption{Distance magic labeling of graphs $P_3$ and $C_4$}
\label{fig: dmp3&c4}
\end{figure}
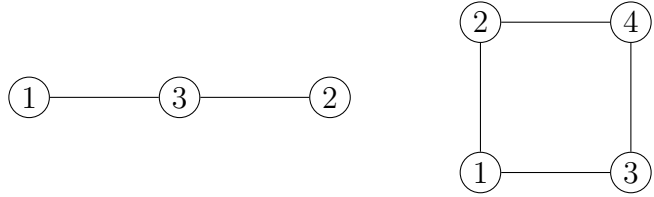

For a given distance magic graph $G$, we know that $|Aut(G)| \le |\mathcal{M}|$. For simplicity, for a given graph $G$, we will denote its vertex labeling as $(f(v_1), f(v_2),\dots, f(v_n))$ or simply $(f(v_1)f(v_2) \dots f(v_n))$ when there is no confusion. Now, we give some examples showing that the lower bound obtained on the size of distance magic labelings $\mathcal{M}$ is tight for several graphs. The equality is observed for the distance magic graphs $P_3$ and $C_4$. The distance magic labelings of $P_3$ and $C_4$ with magic constants $3$ and $5$, respectively, are shown in Figure \ref{fig: dmp3&c4}.

\smallskip
Recall that $|Aut(P_3)| = 2$. Let $P_3 : v_1,v_2,v_3$ be $v_1,v_3$-path. It is easy to verify that $\mathcal{M}(P_3) = \{(132), (231)\}$ is the complete set of distance magic labelings of $P_3$. Hence, $|Aut(P_3)| = |\mathcal{M}(P_3)|$. For a cycle $C_4 : v_1, v_2, v_3, v_4, v_1$; $\mathcal{M}(C_4) = \{(1243), (3124), (4312), (2431), (2134), (1342), (3421), (4213)\}$ and $|Aut(C_4)| = 8$. Therefore, $|Aut(C_4)| = |\mathcal{M}(C_4)|$.
\begin{figure}[ht]
\centering
\begin{tikzpicture}[]
  \foreach \coord/\name/\label [count=\i] in {(0,0)/1/4, (0,2)/2/2, (2,2)/3/3, (2,0)/4/5, (3,2)/7/1, (3,1)/6/7, (3,0)/5/6, (5,0)/8/1, (5,2)/9/2, (7,2)/10/6, (7,0)/11/5, (8,2)/12/3, (8,1)/13/7, (8,0)/14/4, (10,0)/15/1, (10,2)/16/3, (12,2)/17/6, (12,0)/18/4, (13,2)/19/2, (13,1)/20/7, (13,0)/21/5}
   \node[circle, draw, inner sep=2pt] (\name) at \coord {\label};
   \node[below = 10pt] at (1,0) {$M_1$};
   \node[below = 10pt] at (6, 0) {$M_2$};
   \node[below = 10pt] at (11, 0) {$M_3$};
  \draw (1) -- (2) -- (3) -- (4) -- (1);
  \draw (7) -- (6) -- (5);
  \draw (8) -- (9) -- (10) -- (11) -- (8);
  \draw (12) -- (13) -- (14);
  \draw (15) -- (16) -- (17) -- (18) -- (15);
  \draw (19) -- (20) -- (21);
\end{tikzpicture}
\caption{Various distance magic labeling schemes of $P_3 \cup C_4$}
\label{fig: dmp3c4}
\end{figure}
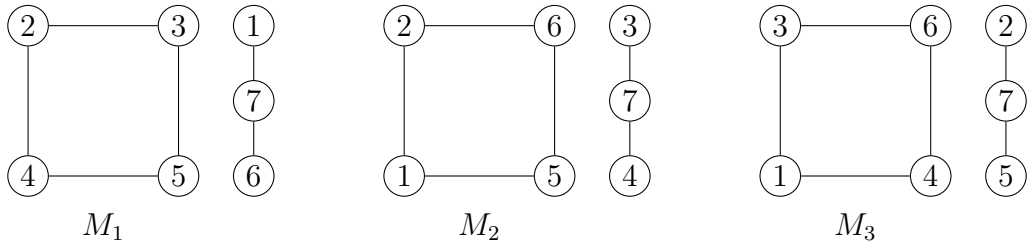

For the distance magic graph $P_3 \cup C_4$, the inequality is strict. We know that $|Aut(P_3 \cup C_4)| = 16$. From each of the distance magic labeling schemes $M_1$, $M_2$ and $M_3$, shown in Figure \ref{fig: dmp3c4}, $16$ different distance magic labelings of $P_3 \cup C_4$ can be obtained using automorphisms of $P_3 \cup C_4$. Therefore, $P_3 \cup C_4$ has at least $48$ different distance magic labelings.

\section{Conclusion and further directions in research}
The graph-theoretic properties and the spectrum of the class `distance magic graphs are unexplored. In this paper, we have studied several graph-theoretic properties of this class.
We raise the following problems:

\begin{problem} \label{prob:2}
Given a graph $G$, which is not distance magic, characterise all the primes $p \le \frac{n^{2}-1}{2}$ such that $G$ is not $p$-distance magic graph.
\end{problem}

\begin{problem}
Characterise the distance magic graphs in terms of the spectra of the matrices related to the graphs.
\end{problem}
\begin{problem}
Determine the class of singular as well as integral distance magic graphs.
\end{problem}
\begin{problem}
Characterisation of distance magic graphs $G$ for which $|Aut(G)| = |\mathcal{M}(G)|$.
\end{problem}
Similar bounds as given in Theorem \ref{th: bound} can be obtained for other variants of graph labelings: for example, say \textit{ antimagic labeling} (a graph $G$ is said to be antimagic if all of its vertex weights under the edge labeling $f: E(G) \to \{1,2, \dots, |E(G)|\}$ are distinct. For more details see \cite{dm_survey_gallian}).

\begin{problem}
Characterise the distance magic graphs of order $n$ with maximum degree $(n-2)$. 
\end{problem}
\begin{problem}
Given a graph $G$ and a positive integer $p$, study the computational complexity of the decision problem: whether $G$ is $p$-distance magic?
\end{problem}

\bibliographystyle{plain} 
\bibliography{refs}

\begin{thebibliography}{10}

\bibitem{dm_survey_arumugam}
S.~Arumugam, D.~Froncek, and N.~Kamatchi.
\newblock Distance magic graphs---{A} survey.
\newblock {\em J. Indones. Math. Soc.}, (Special edition):11--26, 2011.

\bibitem{uniquek}
S.~Arumugam, N.~Kamatchi, and G.~R. Vijayakumar.
\newblock On the uniqueness of {$D$}-vertex magic constant.
\newblock {\em Discuss. Math. Graph Theory}, 34(2):279--286, 2014.

\bibitem{cvetkovic}
D.~Cvetkovi\'{c}, P.~Rowlinson, and S.~Simi\'{c}.
\newblock {\em An Introduction to the Theory of Graph Spectra}.
\newblock Cambridge University Press, 2010.

\bibitem{group_cycles_dalibor}
D.~Froncek.
\newblock Group distance magic labeling of {C}artesian product of cycles.
\newblock {\em Australas. J. Combin.}, 55:167--174, 2013.

\bibitem{dm_survey_gallian}
J.~A. Gallian.
\newblock A dynamic survey of graph labeling (25th edition).
\newblock {\em Electron. J. Combin.}, 5:Dynamic Survey 6, 43, 2022.

\bibitem{s_algo_gupta}
S.~Gupta and M.~Zehavi.
\newblock Multivariate analysis of scheduling fair competitions.
\newblock {\em arXiv:2102.03857}, 2021.

\bibitem{sigma_jinnah}
M.~Jinnah.
\newblock On ${\Sigma}$-labelled graphs.
\newblock In B.D. Acharya and S.M. Hedge, editors, {\em Technical Proceedings
  of Group Discussion on Graph Labeling Problems}, pages 71--77. NITK
  Surathkal, 1999.

\bibitem{kamatchit}
N.~Kamatchi.
\newblock {\em Distance Magic and Distance Antimagic Labelings of Graphs}.
\newblock Phd thesis, Kalasalingam Academy of Research and Education,
  Krishnankoil, Srivilliputhur, Tamil Nadu, June 2012.

\bibitem{dm_4reg_miklavic}
\v{S}. Miklavi\v{c} and P.~\v{S}parl.
\newblock Classification of tetravalent distance magic circulant graphs.
\newblock {\em Discrete Math.}, 344(11):Paper No. 112557, 8, 2021.

\bibitem{dm_6reg_miklavic}
\v{S}. Miklavi\v{c} and P.~\v{S}parl.
\newblock On distance magic circulants of valency 6.
\newblock {\em Discrete Appl. Math.}, 329:35--48, 2023.

\bibitem{dm_miller}
M.~Miller, C.~Rodger, and R.~Simanjuntak.
\newblock Distance magic labelings of graphs.
\newblock {\em Australas. J. Combin.}, 28:305--315, 2003.

\bibitem{o_uniquek}
A.~O'Neal and P.~J. Slater.
\newblock Uniqueness of vertex magic constants.
\newblock {\em SIAM J. Discrete Math.}, 27(2):708--716, 2013.

\bibitem{dm_algo1}
R.~Pawar, T.~Singh, H.~Mukherjee, and J.~Bagga.
\newblock A complete characterization of all magic constants arising from
  distance magic graphs.
\newblock {\em arXiv:2311.10330}, 2023.

\bibitem{s_algo_slater}
P.~Slater.
\newblock It is all labeling.
\newblock In G.~Ralucca, L.~Craig, and H.~Stephen, editors, {\em Graph Theory:
  Favourite conjectures and open problems-1}, pages 231--252. Springer
  International Publishing, 2016.

\bibitem{vilfredt}
V.~Vilfred.
\newblock {\em $\Sigma$-Labelled Graphs and Circulant Graphs}.
\newblock Phd thesis, University of Kerala, Trivandrum, Kerala, India, 1994.

\bibitem{west}
D.~B. West.
\newblock {\em Introduction to Graph Theory}.
\newblock Prentice Hall, 2nd edition, September 2000.

\bibitem{dm_algo_fuad}
F.~Yasin and R.~Simanjuntak.
\newblock A heuristic for distance magic labeling.
\newblock {\em Procedia Computer Science}, 74:100--104, 2015.
\newblock “The 2nd International Conference of Graph Theory and Information
  Security”.

\end{thebibliography}

\end{document}